\title{Limit points of uniform arithmetic bass notes}
\author{Will Hide}
\address[Will Hide]{Mathematical Institute,
	University of Oxford,
	Andrew Wiles Building, OX2 6GG Oxford,
	United Kingdom}
\email{william.hide@maths.ox.ac.uk}
\author{Bram Petri}
\address[Bram Petri]{Institut de Math\'ematiques de Jussieu--Paris Rive Gauche and  Institut universitaire de France ; Sorbonne Universit\'e and Universit\'e Paris Cit\'e, CNRS, IMJ-PRG, F-75005 Paris, France}
\email{bram.petri@imj-prg.fr}
\date{\today}
\pgfplotsset{compat=1.7}
\numberwithin{equation}{section}
\def\namedlabel#1#2{\begingroup
    #2%
    \def\@currentlabel{#2}%
    \phantomsection\label{#1}\endgroup
}
\newtheorem{thm}{Theorem}[section]
\newtheorem{prp}[thm]{Proposition}
\newtheorem{lem}[thm]{Lemma}
\newtheorem{mainthm}{Theorem}
\theoremstyle{definition}
\newtheorem{rem}[thm]{Remark}
\newcommand{\nc}{\newcommand}
\nc{\dmo}{\DeclareMathOperator}
\nc{\abs}[1]{\left| #1 \right|}
\nc{\bigO}[1]{O\left(#1\right)}
\nc{\card}[1]{\left|#1\right|}
\nc{\ceil}[1]{\left\lceil #1 \right\rceil}
\nc{\CC}{\mathbb{C}}
\nc{\dilog}{\mathcal{L}}
\nc{\floor}[1]{\left\lfloor #1 \right\rfloor}
\nc{\ind}{\mathds{1}}
\nc{\ZZ}{\mathbb{Z}}
\nc{\len}[1]{\left| #1 \right|}
\nc{\littleo}[1]{o\left(#1\right)}
\dmo{\Mat}{Mat}
\nc{\NN}{\mathbb{N}}
\nc{\norm}[1]{\left|\left| #1 \right|\right|}
\nc{\QQ}{\mathbb{Q}}
\nc{\RR}{\mathbb{R}}
\nc{\st}[2]{\left\{\, #1 \,:\, #2\,\right\}}
\dmo{\supp}{supp}
\nc{\tr}[1]{\mathrm{tr}\left(#1\right)}
\nc{\what}{\widehat}
\dmo{\im}{Im}
\nc{\eps}{\varepsilon}
\dmo{\li}{li}
\dmo{\arccosh}{arccosh}
\dmo{\area}{area}
\dmo{\conv}{conv}
\dmo{\diam}{diam}
\dmo{\DD}{\mathbb{D}}
\dmo{\dist}{\mathrm{d}}
\nc{\HH}{\mathbb{H}}
\dmo{\Isom}{Isom}
\dmo{\MCG}{MCG}
\dmo{\MPL}{MPL}
\dmo{\Mod}{\mathcal{M}}
\dmo{\PL}{PL}
\nc{\Sphere}{\mathbb{S}}
\dmo{\sys}{sys}
\dmo{\kiss}{Kiss}
\dmo{\Teich}{\mathcal{T}}
\nc{\Torus}{\mathbb{T}}
\dmo{\vol}{vol}
\dmo{\WP}{WP}
\dmo{\convTV}{\;\stackrel{\mathrm{TV}}{\longrightarrow}\;}
\nc{\ExV}[2]{\mathbb{E}_{#1}\left[#2\right]}
\dmo{\EE}{\mathbb{E}}
\nc{\Pro}[2]{\mathbb{P}_{#1}\left[#2\right]}
\dmo{\PP}{\mathbb{P}}
\nc{\distTV}[2]{\mathrm{d}_{\rm TV}\left(#1,#2\right)}
\dmo{\UU}{\mathbb{U}}
\nc{\Var}[2]{\mathbb{V}\mathrm{ar}_{#1}\left[#2\right]}
\dmo{\alt}{\mathfrak{A}}
\dmo{\Aut}{Aut}
\dmo{\Fix}{Fix}
\dmo{\GL}{GL}
\dmo{\Hom}{Hom}
\dmo{\id}{Id}
\dmo{\PGL}{PGL}
\dmo{\PSL}{PSL}
\dmo{\PO}{PO}
\dmo{\Rep}{Rep}
\dmo{\SL}{SL}
\dmo{\SO}{SO}
\dmo{\sym}{\mathfrak{S}}
\dmo{\inv}{\mathcal{I}}
\dmo{\orb}{\mathcal{O}}
\dmo{\stab}{Stab}
\dmo{\calA}{\mathcal{A}}
\dmo{\calB}{\mathcal{B}}
\dmo{\calC}{\mathcal{C}}
\dmo{\calD}{\mathcal{D}}
\dmo{\calE}{\mathcal{E}}
\dmo{\calF}{\mathcal{F}}
\dmo{\calG}{\mathcal{G}}
\dmo{\calH}{\mathcal{H}}
\dmo{\calI}{\mathcal{I}}
\dmo{\calJ}{\mathcal{J}}
\dmo{\calK}{\mathcal{K}}
\dmo{\calL}{\mathcal{L}}
\dmo{\calM}{\mathcal{M}}
\dmo{\calN}{\mathcal{N}}
\dmo{\calO}{\mathcal{O}}
\dmo{\calP}{\mathcal{P}}
\dmo{\calQ}{\mathcal{Q}}
\dmo{\calR}{\mathcal{R}}
\dmo{\calS}{\mathcal{S}}
\dmo{\calT}{\mathcal{T}}
\dmo{\calU}{\mathcal{U}}
\dmo{\calV}{\mathcal{V}}
\dmo{\calW}{\mathcal{W}}
\dmo{\calX}{\mathcal{X}}
\dmo{\calY}{\mathcal{Y}}
\dmo{\calZ}{\mathcal{Z}}
\nc{\klav}{Klav\v{z}ar}
\nc{\bi}{\mathbf{i}}
\nc{\bj}{\mathbf{j}}
\nc{\bk}{\mathbf{k}}
\begin{document}

\begin{abstract} We prove that the set of limit points of the set of all spectral gaps of closed arithmetic hyperbolic surfaces equals $[0,\frac{1}{4}]$.
\end{abstract}

\maketitle

\begin{figure}[h]
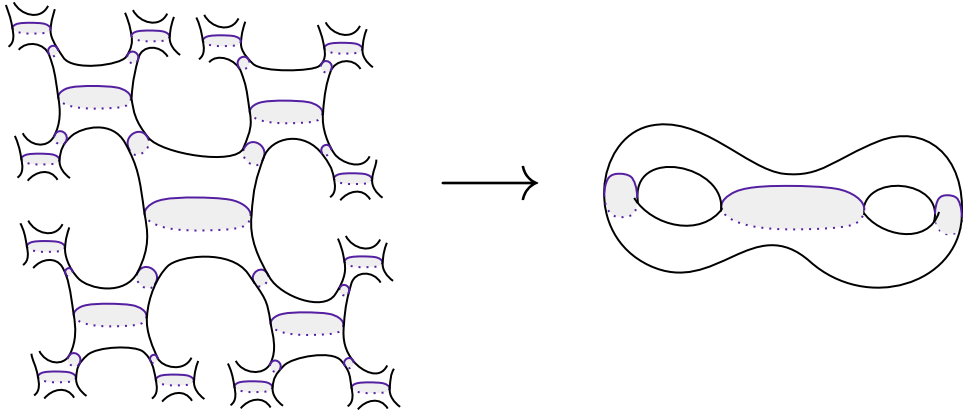

\begin{center}
\begin{overpic}{pic_tree_cover}
\put(45,22){{\Huge $\longrightarrow$}}
\end{overpic}
\caption{A tree cover of a surface of genus two.}\label{pic_tree_cover}
\end{center}
\end{figure}

\section{Introduction}

The \textbf{spectral gap} $\lambda_1(X)$ of a closed orientable hyperbolic surface $X$ is the minimal non-zero eigenvalue of its Laplacian. A natural question, that fits into a larger context of the study of the bass note spectra of locally uniform geometries \cite{Sarnak_Chern_lectures}, is which numbers can appear as spectral gaps of hyperbolic surfaces. It can be derived in various ways from various combinations of \cite{Cheng, Gage,Jenni,Bavard,StrohmaierUski, Bonifacio,KravchukMazacPal,FBP_LP} that the \textbf{bass note spectrum} of the set of closed orientable hyperbolic surfaces satisfies:
\[
\mathrm{Bass}(\mathbf{hyp}_2^c) := \st{\lambda_1(X)}{X \text{ a closed orientable hyperbolic surface}} = (0,\Lambda_2],
\]
where $\Lambda_2 = \sup\st{\lambda_1(X)}{X \text{ a closed hyperbolic surface of genus }2}$. The latter conjecturally equals the spectral gap of the Bolza surface (see Section \ref{sec_Bolza} for a description).

If we restrict to arithmetic surfaces, the set of spectral gaps we obtain becomes countable. Recently, Magee \cite{Magee} proved that the spectral gaps -- defined as the infimum of the non-zero spectrum in this setting -- of non-compact arithmetic surfaces form a dense set in $[0,\frac{1}{4}]$. He also asked whether the same is true for closed arithmetic surfaces. The goal of this article is to answer this question in the affirmative:

\begin{mainthm}\label{thm_main}
The set $\Big\{\lambda_1(X):\;X\text{ a closed arithmetic surface}\Big\}\bigcap \left[0,\frac{1}{4}\right]$
is dense in $\left[0,\frac{1}{4}\right]$.
\end{mainthm}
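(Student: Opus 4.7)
Since every finite cover of an arithmetic hyperbolic surface is again arithmetic, it suffices to exhibit, for any target value $\mu\in[0,\tfrac14]$ and any $\epsilon>0$, a finite cover of a single fixed arithmetic base surface whose spectral gap lies within $\epsilon$ of $\mu$. This reduction is essentially the whole content of the \emph{uniform} (as opposed to \emph{congruence}) hypothesis.

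The plan is to use \emph{tree covers}, as suggested by Figure~\ref{pic_tree_cover}. I would fix an arithmetic surface $X_{0}$ together with a non-separating simple closed geodesic $\gamma\subset X_{0}$, and let $Y$ be the compact surface with two isometric boundary components obtained by cutting $X_{0}$ along $\gamma$. For any finite graph $G$ with a suitable orientation on its edges, one forms a finite cover $X_{G}\to X_{0}$ by taking $|V(G)|$ copies of $Y$ and gluing their boundary circles according to the edges of $G$; when $G$ is a tree this is the tree cover depicted in the figure. Since $X_{G}$ is a finite cover of the arithmetic surface $X_{0}$, it is itself arithmetic, regardless of how exotic $G$ is.

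The analytic heart of the argument is a quantitative Brooks-type correspondence relating the low-lying Laplace spectrum of $X_{G}$ to the spectrum of a weighted discrete operator $L_{G}$ on $G$, where the edge weights encode the transfer of Laplace data across a copy of $Y$. Using an upper bound via test functions (lift an $L_{G}$-eigenfunction, extend across $Y$, and smooth at the gluing circles) together with a matching lower bound via an $L^{2}$-decomposition into graph-theoretic averages plus a correction orthogonal to the $G$-component, one aims to show that $\lambda_{1}(X_{G_{n}})\to \lambda_{0}(L_{G_{\infty}})$ whenever a sequence $G_{n}$ of finite trees Benjamini--Schramm converges to an infinite periodic tree $G_{\infty}$. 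It then remains to verify that, as $G_{\infty}$ varies over infinite periodic trees, the value $\lambda_{0}(L_{G_{\infty}})$ sweeps out a dense subset of $[0,\tfrac14]$: long path-like trees push the limit down to $0$ by a Cheeger/collar argument, trees of high and uniform degree push it up toward $\tfrac14$ (the $L^{2}$-bottom of the universal cover), and suitable one-parameter families of periodic trees interpolate continuously between these extremes.

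The main obstacle is the analytic comparison: one needs error bounds in the Brooks-type correspondence that are uniform in the combinatorics of $G$ and sharp enough to guarantee the convergence $\lambda_{1}(X_{G_{n}})\to\lambda_{0}(L_{G_{\infty}})$ even as the target value approaches $\tfrac14$, where the gap between $\lambda_{1}(X_{G_{n}})$ and the rest of the spectrum of $X_{G_{n}}$ need not remain open. A secondary difficulty is arranging that the transfer operator across $Y$ is flexible enough for the achievable limit values $\lambda_{0}(L_{G_{\infty}})$ to fill the whole interval $[0,\tfrac14]$ and not some proper subset; if a single $(X_{0},\gamma)$ does not suffice, one can pass to commensurable arithmetic surfaces and/or replace $\gamma$ by a multicurve, thereby enlarging the class of discrete operators $L_{G}$ that can appear.
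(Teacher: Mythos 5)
Your opening reduction (finite covers of a fixed arithmetic surface are arithmetic, so it suffices to realize an $\epsilon$-dense set of spectral gaps among such covers) matches the paper, and the idea of building covers by gluing copies of a cut-open piece along a graph is in the right spirit. But the analytic engine you propose does not exist in the form you need, and this is not a technical obstacle to be deferred: a Brooks--Burger-type correspondence between $\lambda_1(X_G)$ and the bottom of the spectrum of a discrete operator on $G$ is only known (and can only hold) for eigenvalues well below $\frac14$; the discrete-to-continuous comparison loses constants that do not vanish as the target approaches $\frac14$, so no choice of graphs $G_n$ can be certified by such a correspondence to give $\lambda_1(X_{G_n})\to\frac14$. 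The paper avoids this entirely: the near-$\frac14$ covers are produced by strong convergence of uniformly random permutation representations of $F_2$ (Bordenave--Collins) transferred to the surface via the Hide--Magee mechanism, with the relevant limit object being the \emph{infinite} planar tree cover $T_\phi$, whose spectral gap is pushed up to $\frac14$ by a Cheeger-constant estimate valid only for infinite-area planar surfaces with large systole (arranged via Mirzakhani's equidistribution of long pants decompositions). No finite graph spectral datum plays this role. A secondary issue with your setup: cutting along a single non-separating curve forces every vertex of $G$ to have degree two, so you only obtain cyclic covers; you need a full pants decomposition by meridians of a handlebody, which is exactly what makes the monodromy factor through $F_2$.

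The second genuine gap is the density mechanism. You assert that one-parameter families of periodic trees ``interpolate continuously,'' but the parameter space is discrete and the spectral gap is not a priori controlled under changing the graph. The paper's substitute is a quantitative discrete intermediate value argument: among the degree-two covers of a fixed random cover $Y^{(n)}_\phi$ there is one with gap $0$ (the disconnected double) and one with gap $\geq\frac14-\eta$ (this uses a separate, computer-assisted linear-programming verification that \emph{every} connected degree-two cover of the Bolza surface has $\lambda_1>\frac14$), any two such covers are connected by a chain of ``switches'' along single pants curves, and each switch moves $\lambda_1$ by at most $\eta$. That last bound requires real work: $L^\infty$ delocalization of eigenfunctions (Gilmore--Le~Masson--Sahlsten--Thomas), the IMS localization formula to control the Rayleigh quotient of the flattened eigenfunction, and a probabilistic argument (via Nica's fixed-point statistics for word maps) guaranteeing uniform collar widths around the switching curves. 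Without a quantitative per-move estimate of this kind, your interpolation step is an assertion of the conclusion rather than a proof of it.
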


In fact, we show that, like in the non-compact case, it suffices to consider only finite index torsion free subgroups of a single uniform arithmetic group. In our case this is the $(2,3,8)$-triangle group. 
 
Similarly, the same method can be used to prove that if $X=\Gamma\backslash\HH^2$ is a closed orientable surface then
\[
\Big\{\lambda_1(\Gamma'\backslash\HH^2):\;\Gamma'<\Gamma,\; [\Gamma:\Gamma'] < \infty \Big\}\bigcap \left[0,\Lambda\right]
\]
is dense in $[0,\Lambda]$, where
$
\Lambda = \min\st{\lambda_1(\Gamma'\backslash\HH^2)}{\Gamma'<\Gamma,\; [\Gamma:\Gamma']=2}
$.

Finally, we also obtain a different proof of \cite[Theorem 1.8]{HideLouderMagee}, the fact that there exists a sequence of finite sheeted covers $Y_n\to X$ of a given closed orientable arithmetic hyperbolic surface $X$, whose genera tend to infinity and whose spectral gap tends to $\frac{1}{4}$.

\subsection{Strategy of proof}

Like Magee's proof in the non-uniform case, we build our arithmetic surfaces as covers of degree two of a certain type of random covers of a fixed base surface. In our case, this base surface is the Bolza surface $X_B$. As such, the overarching strategy of our proof is similar to that of Magee. However, several of the key ingredients of his proof are not present in our setting, so in what follows we will describe the similarities and the differences.

The first thing that makes our case different, is that the fundamental group $\Gamma_B=\pi_1(X_B)$ of the $X_B$ is not free, which makes random finite degree covers harder to control (see \cite{MageePuder,MageeNaudPuder} for the geometry of uniform random covers of a closed hyperbolic surface). We remedy this by considering random covers $Y^{(n)}\to X_B$ of degree $n$ whose mondromy factors through an epimorphism from our surface group $\Gamma_B\to F_2$ to a free group $F_2$ on two generators. This allows us to use the results of Bordenave--Collins \cite{BordenaveCollins} on strong convergence of permutation representations combined with results by Hide--Magee \cite[Appendix]{HideLouderMagee} on the convergence of the spectral gaps of the associated covers. In \cite{HideLouderMagee}, Louder--Magee also did this. However, we need more control over the geometry of the associated cover, so we use a different argument.

We need to control the following things:
\begin{enumerate}
\item In order to obtain near optimal spectral gaps, we need to understand the spectral gap of the cover of the Bolza surface associated to the map $\Gamma_B\to F_2$ (see Figure \ref{pic_tree_cover}). We tackle this using Cheeger's inequality. Using Mirzakhani's result on random pants decompositions of hyperbolic surfaces \cite{Mirzakhani2} combined with some further geometric arguments, we show that by varying the map to the free group (see Section \ref{sec_bass_notes_trees} and Proposition \ref{prp_good_tree_cover}) we can obtain a spectral gap as close to $\frac{1}{4}$ as we like. 
\item To reach intermediate spectral gaps, our aim is to use the covers of degree two of our random cover $Y^{(n)}\to X_B$. To make this work, we need there to be at least one cover among these degree two covers that has a large spectral gap. We obtain such a cover by restricting a non-constant homomorphism $\Gamma_B\to F_2\to\ZZ/2\ZZ$ to the fundamental group of $Y^{(n)}$, seen as a subgroup of $\Gamma_B$. The fact that this indeed yields two sheeted covers of $Y^{(n)}$  with large spectral gaps relies on a strong convergence argument combined with the fact that all connected two sheeted covers $\what{Y}\to X_B$ satisfy $\lambda_1(\what{Y})>\frac{1}{4}$. The latter fact we prove (in Section \ref{sec_bass_notes_of_Bolza_covers}) using the linear programming methods developed in \cite{FBP_LP,FBGMPP}.
\item Like Magee, we then need to show that, when $n$ is large enough, ``small'' modifications of the degree two covers of $Y^{(n)}$ do not influence the spectral gap too much. These modifications consist of changing the cover along the lifts of a simple closed curve in $Y^{(n)}$ and will be called switches (see Section \ref{sec_switch}). In Magee's paper, logarithmic tangle freeness is used to prove that similar moves don't influence the spectral gap too much. Unfortunately, our covers are, by construction, much less tangle free: the infinite cover associated to the map $\Gamma_B\to F_2$ contains many short curves. As a result, the injectivity radius of $Y^{(n)}$ is uniformly bounded. 

We will however show, again relying on Mirzakhani's results on random pants decompositions and also using Nica's work on cycles of random permutations coming from word maps \cite{Nica}, that we can find maps $\Gamma_B\to F_2$ such that, with positive probability, the curves in $Y^{(n)}$ along which we switch, admit uniformly wide collars (see Section \ref{sec_finishing}). This suffices to make the argument, based on the $L^\infty$-bounds on eigenfunctions by Gilmore--Le Masson--Sahlsten--Thomas \cite{GLMST}, work. This implies the density result because any two covers of degree two of $Y^{(n)}$ can be connected by a finite sequence of switches.
\end{enumerate}

\subsection{Further notes and references} 

The study of spectral gaps of hyperbolic surfaces has a long history, which we will not attempt to summarize here. We refer to \cite{Buser,Bergeron} for introductory texts. The shape of the bass note spectrum has been studied in the case of compact hyperbolic $2$-orbifolds \cite{Bonifacio,KravchukMazacPal}, non-compact arithmetic surfaces \cite{Magee} and regular graphs \cite{AlonWei,DongMcKenzie}. In the case of orbifolds, Kravchuk--Mazac--Pal give a complete description of the set 
\[\st{\lambda_1(O)}{O\text{ a compact orientable hyperbolic }2\text{-orbifold}}.
\]
This description is conjectural, but a large part of it is proven (see \cite[Conjecture 4.2]{KravchukMazacPal}).

The case of arithmetic surfaces is very different. There are only finitely many arithmetic hyperbolic surfaces of any given genus (up to isometry). As such, the theorem above can be stated as saying that the set of limit points of the set of spectral gaps of closed hyperbolic surfaces equals $[0,\frac{1}{4}]$. There are however isolated points outside of this interval, for instance corresponding to the Bolza surface and the Klein quartic. Sarnak \cite{Sarnak_Chern_lectures} conjectured that there should be infinitely many such isolated points.

\subsection*{Acknowledgements}
We are very grateful to Maxime Fortier Bourque for his part in developing the code that we use in Section \ref{sec_bass_notes_of_Bolza_covers}. We also thank the \texttt{SageMath} \cite{sagemath}, \texttt{Arb} \cite{Arb} and \texttt{GAP} \cite{GAP} developers for making the calculations using that same code possible.

We thank the CIRM for hosting the workshop \emph{Random hyperbolic surfaces and random graphs}, part of the Jean-Morlet Chair semester \emph{Random Matrices, Representation Theory and Quantum Information}, during which this project started. 

Finally, we thank Michael Magee and Frédéric Naud for useful conversations surrounding this project.

\subsection*{Funding}
BP was partially supported by the grant ANR-23-CE40-0020-02 ``MOST''.

\section{The model}\label{sec_model}

In this section we formally describe our model of random surfaces, which is a variation of the models appearing in \cite{BCP_diameter,HideLouderMagee,Mathien}.

\subsection{The Bolza surface}\label{sec_Bolza}

The model we will describe samples a certain type of covers of a closed hyperbolic surface. The model makes sense for any closed base surface, but because of the application we have in mind, we will assume the base surface is the Bolza surface. We will use this subsection to describe some of its properties.

The quickest way to define the Bolza surface -- that we shall denote $X_B$ -- is to say that it's a Riemann surface of genus $2$ with $48$ automorphisms. Indeed, it turns out that this is the maximal possible number of automorphisms of a Riemann surface of genus $2$ and there is a unique such surface. More concretely, $X_B$ can be obtained by taking a regular octagon $O$ with interior angles $\pi/4$ in the hyperbolic plane $\HH^2$ and identifying opposite sides. We will also fix a Fuchsian group $\Gamma_B$ such that $X_B = \Gamma_B \backslash \HH^2$ and we will moreover fix an identification of $\Gamma_B$ with $\pi_1(X_B)$.

The surface $X_B$ is also a $(2,3,8)$-triangle surface. That is, its fundamental group is a normal subgroup (of index $48$) in the orientable triangle group 
\[
\Theta(2,3,8) = \langle x,y,z|\; x^2,\; y^3,\; z^8,´\; xyz\rangle
\]
whose generators act on $\HH^2$ by the order $2$, $3$ and $8$ rotations in the vertices of a hyperbolic triangle whose interior angles are $\pi/2$, $\pi/3$ an $\pi/8$ respectively. The quotient group is described explicitly as item \texttt{T2.1} in \cite{Conder}. Since the $(2,3,8)$-triangle group is arithmetic \cite{Takeuchi} (see also \cite[Section 13.3]{MaclachlanReid}), $X_B$ is arithmetic.

Very good numerical estimates of the spectral gap of $X_B$ have been obtained in \cite{StrohmaierUski}. That is,
\[
\lambda_1(X_B) = 3.838887 \pm 10^{-6}.
\]
The best known upper bound on the maximal spectral gap of a hyperbolic surface of genus $2$ equals $3.8388976481$ \cite{Bonifacio,KravchukMazacPal}.

\subsection{Handlebody attachments, maps to free groups and pants decompositions}

A handlebody $H$ is a three-manifold that can be obtained by attaching $1$-handles to a closed $3$-ball. Alternatively it can be seen as the closed regular neighborhood of a finite graph in $\RR^3$. In particular, its fundamental group is a free group and its boundary is a closed orientable surface. The genus of this surface will be called the \textbf{genus} of $H$. This genus coincides with the rank of the fundamental group of $H$. The inclusion map $\iota:\partial H\hookrightarrow H$ induces an epimorphism
\[
\iota_*:\pi_1(\partial H) \longrightarrow F_g,
\]
where $g$ is the genus of $H$. In fact, we obtain a large family of such maps, because we can precompose the inclusion by a homeomorphism $\phi:\partial H \to \partial H$.

Any such homeomorpism yields an $F_g$-cover
\[
T_\phi \to \partial H
\]
corresponding to the subgroup $\ker((\iota\circ\phi)_*)$. The surface $T_\phi$ is what is sometimes called a Cantor tree: a surface homemorphic to $\Sphere^2-C$, where $C$ is a Cantor set. As such, we will call the $F_g$-covers constructed by the procedure above \textbf{tree covers}.

Finally, we observe that if $\calP$ is a pants decomposition of $\partial H$ consisting entirely of \textbf{meridians} -- simple closed curves that bound disks in $H$ -- then $\phi^{-1}(\calP)$ lifts to a pants decomposition of $T_\phi$. For a picture of this set-up, see Figure \ref{pic_tree_cover}.

\subsection{Random covers}\label{sec_rand_cov}

We will now once and for all fix a handlebody attachment $\iota:X_B\hookrightarrow H$ of the Bolza surface $X_B$ and a pants decomposition $\calP_0=(\alpha_1,\alpha_2,\alpha_3)$ of the $X_B$ that consists entirely of meridians.

Given a homeomorphism $\phi:X_B\to X_B$, we obtain, as in the previous section, a homomorphism $(\iota \circ \phi)_*:\Gamma_B \to F_2$ and a tree cover $T_\phi \to X_B$. The pants decomposition of $T_\phi$ that we obtain by lifting $\phi^{-1}(\calP_0)$ will be called the \textbf{canonical pants decomposition} of $T_\phi$ in what follows. 

Now, given a homomorphism $\rho\in\Hom(F_2,\sym_n)$, where $\sym_n$ denotes the symmetric group on $n$ letters, we obtain a subgroup
\[
H^{(n)} = \stab_\rho(\{1\}) \quad < \quad F_2,
\]
but also a homomorphism
\[
\rho\circ (\iota \circ\phi)_*:\Gamma_B \longrightarrow \sym_n
\]
and hence a subgroup
\[
\Lambda_{\phi,\rho} = \stab_{\rho\circ (\iota \circ\phi)_*}(\{1\}) \quad < \quad \Gamma_B
\]
of index at most $n$ and thus a cover 
\[
Y_{\phi,\rho} \longrightarrow X_B
\]
of degree at most $n$. 

Our random surface $Y_\phi^{(n)}$ will be obtained by taking the permutation representation $\rho \in \Hom(F_2,\sym_n)$ uniformly at random. It follows from results by Dixon \cite{Dixon} that, as $n\to\infty$, the probability that the cover $Y_\phi^{(n)}$ is of degree exactly $n$ tends to $1$.

\section{The bass notes of degree two covers of the Bolza surface}\label{sec_bass_notes_of_Bolza_covers}

In this section we will explain how we prove, with the help of a computer, that all the degree two covers $Y\to X_B$ of the Bolza surface satsify $\lambda_1(Y) > \frac{1}{4}$. To do so, we will employ the linear programming methods from \cite{FBP_LP,FBGMPP}.

In fact, just like in \cite{Magee}, this can be proved by verifying the spectral gap of a single surface $Y_{17}$ obtained as the $\mathrm{H}_1(X_B,\ZZ/2\ZZ)$-cover of $X_B$. Indeed, any homomoprhism $\Gamma_B\to\ZZ/2\ZZ$ factors through $\mathrm{H}_1(X_B,\ZZ/2\ZZ)$ and as such any degree two cover of $X_B$ is intermediate to the cover $Y_{17}\to X_B$.

\subsection{Identifying the cover}

First of all, we need a description of $Y_{17}$. Since 
\[
\card{\mathrm{H}_1(X_B,\ZZ/2\ZZ)}=16,
\]
the genus of $Y_{17}$ equals $17$ (whence the notation). Moreover, by construction, the fundamental group of $Y_{17}$ is a characteristic\footnote{Recall that a characteristic subgroup of $\Gamma_B$ is a subgroup that is preserved by all elements of $\Aut(\Gamma_B)$. This is a stronger assumption than being a normal subgroup.} subgroup of $\Gamma_B$. This implies that it's a normal subgroup of the $(2,3,8)$-triangle group $\Theta(2,3,8)$. In other words, $Y_{17}$ is a $(2,3,8)$-triangle surface of genus $17$. It turns out there is only one such surface, which appears as item \texttt{T17.2} in Conder's list \cite{Conder}. In particular, $Y_{17}$ corresponds to the quotient
\[
G= \langle x,y,z|\; x^2,\; y^3,\; z^8,\; xyz,\; z^2 y x z^2 y x z y^{-1}z^{-1} x z y^{-1} z^{-1} x \rangle
 \]
of $\Theta(2,3,8)$.

\subsection{Excluding small eigenvalues}

In order to prove that $Y_{17}$ has no small eigenvalues, we use the method from \cite{FBGMPP} that we will very briefly describe here. For more details, we refer to loc. cit. and \cite{CornelissenPeyerimhoff}.

First of all, the Deck group $G$ of the orbifold cover $Y_{17} \to S_{2,3,8}$, where $S_{2,3,8}=\Theta(2,3,8)\backslash\HH^2$ denotes the orientable $(2,3,8)$-triangle orbifold, acts on functions on $Y_{17}$. This action commutes with the Laplacian, so we can split the eigenspaces of the Laplacian into $G$-representations.

Our goal is now to exclude small eigenvalues from these representations. To do so, we use a criterion based on the twisted Selberg trace formula corresponding to every such representation. 

We start with some set up. We will write $\mathrm{spec}(Y_{17})$ for the spectrum of the Laplacian on $Y_{17}$, as a multi-set. Moreover, $\mathrm{Irr}(G)$ will denote the finite set of real $G$-representations $\rho:G\to\GL(V^\rho)$. The splitting of the eigenspaces into $G$-representations described above yields a splitting of the spectrum
\[
\mathrm{spec}(Y_{17}) = \bigcup_{\rho\in \mathrm{Irr}(G)} \dim(\rho)\cdot \mathrm{spec}(Y_{17},\rho),
\]
where multiplication of a multi-set by a positive integer means multiplying the multiplicities by that integer and
$\mathrm{spec}(Y_{17},\rho)$ is the spectrum of the Laplacian acting on functions $\HH^2\to V^\rho$ that are equivariant with respect to the action of $\Theta(2,3,8)$ on the left and $\rho(G)$ on the right. This spectrum can also be interpreted in terms of the associated bundle.

For $d>0$, we define $f_d:\RR\to\RR$ by
\[
f_d(x) = \left(\frac{1}{2d}\chi_{[-d,d]}\right)^{*4}(x) = \left\{
\begin{array}{ll}
\frac{1}{12d}\left(4-\frac{3}{2d^2}x^2+\frac{3}{8d^3}\abs{x}^3\right) & \text{if } 0\leq \abs{x} \leq 2d, \\[3mm]
\frac{1}{12d}\left(2-\frac{\abs{x}}{2d}\right)^3 &
\text{if }2d \leq \abs{x} \leq 4d \text{ and} \\[3mm]
0 & \text{otherwise}.
\end{array}
\right.
\]
Here $\chi_{[-d,d]}$ denotes the characteristic function of the interval $[-d,d]$ and ``$*$'' denotes the convolution product.

The Fourier transform $\what{f_d}:\CC\to\CC$ of $f$ is given by
\[
\what{f_d}(y) = \int_{-\infty}^\infty f_d(x) e^{-iy\cdot x}dx = \frac{\sin(d\cdot y)^4}{(d\cdot y)^4}, \quad y \in \CC.
\]

Given $\rho\in\mathrm{Irr}(G)$, we define
\begin{align}\label{eq_geomside}
\calG_d(\rho) & = -\frac{\dim(V^\rho)}{96}\int_{-\infty}^\infty \frac{f'_d(x)}{\sinh(x/2)}\, dx \\ \notag
& \quad + \sum_{[\gamma] \in \calE(\Gamma)} \frac{\tr{\widetilde{\rho}(\gamma)}}{m(\gamma)} \int_0^\infty
\frac{\cosh(x/2)}{\cosh(x)-1+2\sin(\theta(\gamma))^2} \, f_d(x)\,dx 
 \\ \notag
& \quad  + \sum_{[\gamma] \in \calP(\Gamma)} \ell(\gamma)\; \sum_{n\geq 1} \frac{\tr{\widetilde{\rho}(\gamma^n)}}{2\sinh(n\ell(\gamma)/2)} f_d(n\ell(\gamma)),
\end{align}
where
\begin{itemize}
\item $\widetilde{\rho}$ is the composition of $\rho:G\to\GL(V^\rho)$ with the quotient map $\Theta(2,3,8) \to G$,
\item $\calE(\Gamma)$ denotes the set of conjugacy classes of elliptic elements in $\Theta(2,3,8)$,
\item for an elliptic element $\gamma\in\Theta(2,3,8)$, $m(\gamma)$ denotes its order and $\theta(\gamma)$ denotes half the angle of rotation, i.e., is such that $\gamma$ is conjugate to
\[
\left[ \begin{array}{cc} \cos(\theta(\gamma)) & \sin(\theta(\gamma)) \\ - \sin(\theta(\gamma)) & \cos(\theta(\gamma)) \end{array}\right] \in \PSL(2,\RR),
\]
\item  $\calP(\Gamma)$ denotes the set of conjugacy classes of primitive hyperbolic elements in $\Theta(2,3,8)$,
\item for a hyperbolic element $\gamma\in\Theta(2,3,8)$, $\ell(\gamma)$ denotes its translation length on $\HH^2$.
\end{itemize}
We observe that, because $f_d$ has finite support, the sum and the integrals appearing on the right hand side of \eqref{eq_geomside} are all finite.

Given these definitions, the criterion we use is the following:

\begin{prp}[{\cite[Proposition 3.1]{FBGMPP}}]\label{prp_LP} Let $\rho\in\mathrm{Irr}(G)$ and let $\lambda>0$. Suppose that
\[
\what{f}_d\left(\sqrt{\lambda-\frac{1}{4}}\right) > \left\{
\begin{array}{ll}\calG_d(\rho) & \text{if }\rho \text{ is non-trivial} \\
\calG_d(\rho)-\what{f}(i/2) & \text{if }\rho\text{ is trivial},
\end{array}
\right.
\]
for some $d>0$. Then $\lambda \notin \mathrm{spec}(Y_{17},\rho)$.
\end{prp}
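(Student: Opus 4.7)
The plan is to apply the twisted Selberg trace formula for the cocompact Fuchsian group $\Theta(2,3,8)$ with coefficient system $\widetilde{\rho}$, using $f_d$ as the test function on the geometric side. Since $Y_{17}$ is the $G$-cover of the orbifold $S_{2,3,8}=\Theta(2,3,8)\backslash\HH^2$, the multi-set $\mathrm{spec}(Y_{17},\rho)$ coincides with the spectrum of the Laplacian acting on $\widetilde{\rho}$-equivariant sections over $S_{2,3,8}$, and the trace formula produces an identity
$$
\sum_{\lambda\in \mathrm{spec}(Y_{17},\rho)} \what{f}_d\!\left(\sqrt{\lambda-\tfrac14}\right) \;=\; (\text{geometric side}),
$$
where the spectral parameter $\sqrt{\lambda-1/4}$ is taken in $\RR_{\geq 0}$ if $\lambda\geq 1/4$ and in $i[0,1/2]$ otherwise, and each eigenvalue is counted with multiplicity.

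First I would identify the geometric side with $\calG_d(\rho)$. The three summands correspond respectively to the identity, elliptic and hyperbolic orbital integrals in the twisted trace formula for $\Theta(2,3,8)$, following the derivation in \cite{CornelissenPeyerimhoff}. The identity term contributes a volume factor proportional to $\dim(V^\rho)\cdot\area(S_{2,3,8})$, which after working through the normalizations of the Selberg transform yields the prefactor $\dim(V^\rho)/96$ in the first line of \eqref{eq_geomside}. The elliptic term becomes a finite sum over $[\gamma]\in\calE(\Theta(2,3,8))$ weighted by the twisted character $\tr{\widetilde{\rho}(\gamma)}/m(\gamma)$, with the standard Harish-Chandra kernel written in the half-angle variable $\theta(\gamma)$. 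The hyperbolic term is the usual twisted sum over primitive conjugacy classes in $\calP(\Theta(2,3,8))$. Compact support of $f_d$ guarantees absolute convergence of all three contributions, so this step is essentially bookkeeping.

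The decisive input is positivity of $\what{f}_d$. Since $f_d$ is a fourfold convolution of a nonnegative function, $f_d\geq 0$ pointwise, and therefore $\what{f}_d(y)=\sin^4(dy)/(dy)^4\geq 0$ for $y\in\RR$, while
$$
\what{f}_d(it) \;=\; \frac{\sinh^4(dt)}{(dt)^4} \;>\;0 \qquad (t\in\RR\setminus\{0\}).
$$
Hence every summand on the spectral side is nonnegative. Suppose now, for contradiction, that $\lambda\in\mathrm{spec}(Y_{17},\rho)$. If $\rho$ is non-trivial and irreducible, then no non-zero vector in $V^\rho$ is $\widetilde{\rho}$-invariant, so the constant sections are not equivariant and $0\notin \mathrm{spec}(Y_{17},\rho)$; discarding all spectral terms except the one at $\lambda$ yields $\calG_d(\rho)\geq \what{f}_d(\sqrt{\lambda-1/4})$. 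If $\rho$ is trivial, the eigenvalue $0$ occurs with multiplicity one and contributes exactly $\what{f}_d(i/2)$, so $\calG_d(\rho)\geq \what{f}_d(i/2)+\what{f}_d(\sqrt{\lambda-1/4})$. In either case this contradicts the hypothesis on $\what{f}_d(\sqrt{\lambda-1/4})$, and therefore $\lambda\notin\mathrm{spec}(Y_{17},\rho)$.

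The only point requiring real care is the first step: matching conventions for the fundamental-domain area, the half-angle parametrization in the elliptic weights, and the normalization of the length spectrum so that the geometric side of the trace formula equals \eqref{eq_geomside} on the nose. Once that identification is established, the argument is a one-line positivity estimate.
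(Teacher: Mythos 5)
Your argument is correct and is exactly the standard trace-formula positivity argument behind this statement; note that the paper itself does not prove Proposition \ref{prp_LP} but imports it verbatim from \cite[Proposition 3.1]{FBGMPP}, and your reconstruction (geometric side of the twisted Selberg trace formula equals $\calG_d(\rho)$, all spectral terms are nonnegative because $\what{f}_d\geq 0$ on $\RR\cup i\RR$, drop everything except the term at $\lambda$ and, in the trivial case, the guaranteed zero eigenvalue) is what that reference does. One small slip in justification: the nonnegativity of $\what{f}_d$ on $\RR$ does not follow from $f_d\geq 0$ pointwise, but rather from $\what{f}_d$ being the fourth power of the real even function $\sin(dy)/(dy)$; it is the pointwise nonnegativity of $f_d$ that gives $\what{f}_d(it)=\int f_d(x)e^{tx}\,dx>0$ on the imaginary axis --- both facts are true and both are needed, so this does not affect the validity of the proof.
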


We prove the following lower bound on the spectral gap of $Y_{17}$:
\begin{prp}
The criterion from Proposition \ref{prp_LP} hold for the parameter $d=\frac{3}{4}$ for all $\rho\in\mathrm{Irr}(G)$ and for all $\lambda \leq 0.2501$. In particular, 
\[
\lambda_1(Y_{17}) \geq  0.2501.
\]
\end{prp}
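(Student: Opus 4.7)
The proof is a rigorous computer-assisted verification of the inequality appearing in Proposition \ref{prp_LP} for every $\rho\in\mathrm{Irr}(G)$ at the fixed parameter $d=\frac{3}{4}$. The plan is to treat the three terms making up $\calG_d(\rho)$ separately and compare their sum to the right hand side, which, crucially, at $\lambda=0.2501$ has $\sqrt{\lambda-\tfrac{1}{4}}$ purely imaginary of very small modulus, so that $\what f_d(\sqrt{\lambda-1/4})$ is close to $1$ and can be rigorously bounded from below. Since the map $\lambda\mapsto \what f_d(\sqrt{\lambda-1/4})$ is monotone on $[0,1/4)$ (it equals $\sinh(d\sqrt{1/4-\lambda})^4/(d\sqrt{1/4-\lambda})^4$ there), verifying the inequality at the single endpoint $\lambda=0.2501$ suffices.

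First I would use \texttt{GAP} together with the explicit presentation of $G$ to enumerate $\mathrm{Irr}(G)$, produce the character table, and read off $\dim(V^\rho)$ and the values $\tr{\widetilde\rho(\gamma)}$ on the (finitely many) elliptic conjugacy classes of $\Theta(2,3,8)$, which are represented by the powers of the standard generators of orders $2$, $3$ and $8$. For each such class the rotation half-angle $\theta(\gamma)$ is an explicit rational multiple of $\pi$, so the integrals in the elliptic term of \eqref{eq_geomside} reduce to one-dimensional numerical integrations of an elementary function against $f_d$, handled in interval arithmetic with \texttt{Arb}. The identity contribution is likewise a single explicit integral that scales linearly in $\dim(V^\rho)$.

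The hyperbolic sum is finite because $f_d$ is supported in $[-4d,4d]=[-3,3]$, so only primitive hyperbolic conjugacy classes $[\gamma]$ in $\Theta(2,3,8)$ of translation length at most $3$, together with their iterates $\gamma^n$ satisfying $n\ell(\gamma)\leq 3$, contribute. The main obstacle is the \emph{certified} enumeration of these short primitive geodesics: I would perform a breadth-first search in the Cayley graph of $\Theta(2,3,8)$ using its action on $\HH^2$ by the explicit matrices associated to the angles $\pi/2,\pi/3,\pi/8$, using the word-length / translation-length comparison to guarantee that no conjugacy class with $\ell(\gamma)\leq 3$ is missed, and using the normal form of the triangle group to detect conjugacy and primitivity. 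For each representative one then lifts to $G$ via the presentation and reads off $\tr{\widetilde\rho(\gamma^n)}$ from the character table.

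Having assembled a certified upper bound on $\calG_d(\rho)$ and a certified lower bound on the right hand side of the criterion (using $\what f(i/2)$ explicitly in the trivial case), the final step is to check the strict inequality numerically in \texttt{Arb} for each $\rho\in\mathrm{Irr}(G)$. Proposition \ref{prp_LP} then rules out every $\lambda\leq 0.2501$ from every spectrum $\mathrm{spec}(Y_{17},\rho)$, and hence from $\mathrm{spec}(Y_{17})=\bigcup_\rho \dim(\rho)\cdot\mathrm{spec}(Y_{17},\rho)$, yielding $\lambda_1(Y_{17})\geq 0.2501$. The principal difficulties are the certified geodesic enumeration and propagating interval arithmetic throughout; the volume of computation is tractable because $d$ is small and $|G|=768$.
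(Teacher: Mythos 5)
Your proposal matches the paper's proof in all essentials: the paper likewise computes the character table of $G$ with \texttt{GAP}, uses the support $[-3,3]$ of $f_{3/4}$ to reduce the geometric side to the elliptic classes (powers of $x$, $y$, $z$) plus the finitely many primitive hyperbolic classes of translation length at most $3$, evaluates the integrals with interval arithmetic in \texttt{Arb}, and verifies the inequality of Proposition \ref{prp_LP}; the only organizational difference is that the paper imports the list of short hyperbolic conjugacy classes from \cite{FBGMPP} rather than re-deriving it by a certified Cayley-graph search as you propose. One small slip worth noting: since $0.2501>\frac{1}{4}$, the argument $\sqrt{\lambda-\frac{1}{4}}$ is real (not purely imaginary) at $\lambda=0.2501$, though your monotonicity observation that checking this single value of $\lambda$ suffices remains valid.
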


\begin{proof} The final proof of this proposition is performed using \texttt{SageMath} and its interface to \texttt{GAP}. Here we will discuss the main ingredients.

\begin{enumerate}
\item We compute the character table of $G$, using \texttt{GAP}.

\item Now we observe that when $d=\frac{3}{4}$, the support $\supp(f_d) = [-3,3]$. As such, we need to know all the hyperbolic conjugacy classes of translation length at most $3$ and all the elliptic conjugacy classes of $\Theta(2,3,8)$ in order to compute $\calG_d(\rho)$. The latter are the just the conjugacy classes of the generators $x$, $y$ and $z$ and their powers. The hyperbolic conjugacy classes we need were determined in \cite{FBGMPP}.

\item The integrals appearing in $\calG_d(\rho)$ are treated using interval arithmetic as implemented in the \texttt{Arb} package.
\end{enumerate}

The claim in our proposition is verified in the \texttt{Jupyter} notebook \linebreak \texttt{computation\_spectral\_gap.ipynb} that is attached to the arXiv version of this article as an ancillary file.
\end{proof}

\begin{rem}
Because $0.2501$ suffices for what follows, we have made no effort to optimize the bound in the proposition above. 
\end{rem}

\section{Bass notes of planar surfaces of infinite area}\label{sec_bass_notes_trees}

Given a hyperbolic surface $X$, we define its \textbf{Cheeger constant} by
\[
h(X) = \inf\st{ \frac{\ell(\partial A)}{\area(A)}}{ \begin{array}{c} A\subset X \text{ compact } \\ \partial A \text{ rectifiable} \end{array}}.
\]
The \textbf{systole} -- the length of the shortest closed geodesic on $X$ -- of a hyperbolic surface $X$ will be denoted $\sys(X)$. We call a surface \textbf{planar} if it has genus $0$. Note that such a surface either has cusps or infinite area.

We claim:
\begin{lem}
Let $X$ be a planar orientable hyperbolic surface without cusps. Then
\[
h(X) \geq 1 - \frac{2\pi}{\sys(X) + 2\pi }.
\]
\end{lem}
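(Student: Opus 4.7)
The plan is to combine the classical isoperimetric inequality in $\HH^2$ with Gauss--Bonnet and a sharp annular estimate around closed geodesics in $X$. I would first assume without loss of generality that $A$ is connected and has smooth boundary, approximating rectifiable boundaries in the usual way. If $A$ is a topological disk in $X$, then $A$ lifts isometrically to $\HH^2$, so the hyperbolic isoperimetric inequality gives $\area(A) \leq \ell(\partial A)$, which suffices because $1 \geq \sigma/(\sigma + 2\pi)$, writing $\sigma := \sys(X)$.

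Otherwise $A$ is a planar subsurface with $b \geq 2$ boundary components, which I split into null-homotopic components $\gamma_i$ (each bounding a disk $D_i \subset X$) and essential components $\delta_j$. Connectedness of $A$ together with the fact that $A$ is not a disk forces each $D_i$ to lie in $X \setminus A$, so replacing $A$ by $A' := A \cup \bigcup_i D_i$ yields $\area(A) \leq \area(A')$, $\ell(\partial A') \leq \ell(\partial A)$, and $\partial A'$ consisting only of essential components $\delta_1, \ldots, \delta_m$. Since $X$ has no cusps, each $\delta_j$ is freely homotopic to a closed geodesic $\delta_j^*$ with $\ell(\delta_j) \geq \ell(\delta_j^*) \geq \sigma$, whence $m \leq \ell(\partial A')/\sigma$; planarity forces $m \geq 2$, because a disk cannot be bounded in $X$ by a curve essential in $X$.

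The main geometric step is an annular estimate: for each $j$, the annulus $A_j \subset X$ cobounded by $\delta_j$ and $\delta_j^*$ satisfies $\area(A_j) \leq \ell(\delta_j)$. I would lift $A_j$ to the cyclic cover of $X$ corresponding to $\langle [\delta_j^*] \rangle$, which is a hyperbolic cylinder with core $\delta_j^*$, and work in Fermi coordinates $(s, t)$ in which the metric reads $\cosh^2(t)\, ds^2 + dt^2$. After a small perturbation so that $\delta_j$ is disjoint from $\delta_j^*$, say entirely in $\{t > 0\}$, the perpendicular line $\{(s,t):t\in\RR\}$ meets $\delta_j$ in finitely many points, and the intersection of $A_j$ with that line is contained in $[0, t_{\max}(s)]$ where $t_{\max}(s)$ denotes the topmost intersection. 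Then
\[
\area(A_j) \leq \int_0^{\ell(\delta_j^*)} \sinh\bigl(t_{\max}(s)\bigr)\, ds \leq \int_0^{\ell(\delta_j^*)} \cosh\bigl(t_{\max}(s)\bigr)\, ds \leq \ell(\delta_j),
\]
using $\sinh \leq \cosh$ on $[0, \infty)$ and that the points $(s, t_{\max}(s))$ lie on $\delta_j$, so the arcs of $\delta_j$ realising the envelope have total length at least $\int \cosh(t_{\max})\, ds$. Gauss--Bonnet applied to $A_j$ (Euler characteristic $0$, one geodesic boundary component) then gives $\int_{\delta_j} \kappa_g \leq \area(A_j) \leq \ell(\delta_j)$, where the curvature is taken with respect to the normal pointing into $A'$ (if $\delta_j^*$ is outside $A'$ the integral is in fact non-positive).

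Assembling everything via Gauss--Bonnet on $A'$,
\[
\area(A') = \sum_{j=1}^m \int_{\delta_j} \kappa_g + 2\pi(m - 2) \leq \ell(\partial A') + 2\pi\bigl(\ell(\partial A')/\sigma - 2\bigr) \leq \ell(\partial A')\left(1 + \frac{2\pi}{\sigma}\right),
\]
and therefore $\area(A) \leq \area(A') \leq \ell(\partial A)(1 + 2\pi/\sigma)$, which rearranges to the claimed inequality. The main obstacle is the annular estimate $\area(A_j) \leq \ell(\delta_j)$ when $\delta_j$ is not embedded as a graph over $\delta_j^*$ in the Fermi chart: one has to track the multiple intersections with perpendicular lines carefully and run the envelope argument above. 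A secondary technical point is the degenerate case of several $\delta_j$ sharing a common geodesic representative, which can be handled by a small transverse perturbation of $\partial A$ that affects $\area(A)$ and $\ell(\partial A)$ by arbitrarily little.
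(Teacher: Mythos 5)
Your argument follows the same skeleton as the paper's: reduce to compact planar subsurfaces whose boundary is (essentially) geodesic, bound the number $m$ of boundary components by $\ell(\partial A')/\sys(X)$, and get the area from Gauss--Bonnet, $\area(A')=\sum_j\int_{\delta_j}\kappa_g+2\pi(m-2)$; the constants come out identically, giving $h(X)\ge \sys(X)/(\sys(X)+2\pi)$. The difference is where the reduction comes from. The paper quotes $h(X)\ge H(X)/(H(X)+1)$ from Matsuzaki and Mirzakhani, where $H$ is the Cheeger constant computed only over subsurfaces with simple closed geodesic boundary, and disposes of the elementary cases ($\HH^2$ and hyperbolic annuli) separately via Adams--Morgan. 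You instead prove the reduction by hand: fill null-homotopic boundary components with disks, and control each essential boundary component $\delta_j$ by the annulus down to its geodesic representative $\delta_j^*$ via the Fermi-coordinate estimate $\int_{\delta_j}\kappa_g\le\area(A_j)\le\ell(\delta_j)$. Done correctly this is more self-contained and uniformly absorbs the elementary cases, which is a genuine advantage.

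The gaps are concentrated in that by-hand part. A minor one first: a null-homotopic boundary component $\gamma_i$ need not bound a disk in $X\setminus A$; the disk $D_i$ it bounds may instead contain $A$, in which case $A$ sits inside a disk and you should conclude directly from the isoperimetric inequality rather than form $A'$. The serious one is the sentence ``after a small perturbation so that $\delta_j$ is disjoint from $\delta_j^*$'': a simple closed curve can meet its geodesic representative in transverse intersection points, and these are stable under small perturbations; removing them requires an isotopy of definite size, which you cannot afford because it changes $\ell(\delta_j)$, $\int_{\delta_j}\kappa_g$ and $A'$ itself. When $\delta_j\cap\delta_j^*\neq\emptyset$ there is no embedded annulus $A_j$ cobounded by the two curves, so both the envelope computation and the sign discussion (``if $\delta_j^*$ is outside $A'$'') break down as written; the annulus being partly inside and partly outside $A'$ is not covered either. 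The inequality you want is true --- it is essentially the content of the results the paper cites (Matsuzaki, Theorem 7; Mirzakhani, Proposition 4.7) --- but a correct direct proof has to run the coarea/envelope argument in the cyclic cover on the region cut off by the elevation of $\delta_j$ (which is a simple essential curve there, hence separates the two ends), without presupposing disjointness from the core geodesic. Either patch that argument or do what the paper does and cite the reduction.
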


\begin{proof}
The case in which the fundamental group of $X$ is elementary -- which in our setting implies that $X$ is either the hyperbolic plane or an annulus $\langle g\rangle\backslash\HH^2$ with $g\in\PSL(2,\RR)$ hyperbolic -- is covered by the results of Adams--Morgan \cite[Theorem 2.2, Lemma 2.3]{AdamsMorgan}.

As such, we may assume the fundamental group of $X$ is non-elementary. We define 
\[
H(X) = \inf\st{ \frac{\ell(\partial A)}{\area(A)}}{\begin{array}{c} A\subset X \text{ compact } \\ \partial A \text{ consists of simple closed geodesics} \end{array}}.
\]
Clearly $h(X) \leq H(X)$. It turns out that
\[
h(X) \geq \frac{H(X)}{H(X)+1}
\]
see \cite[Theorem 7]{Matsuzaki}\footnote{Note that Matsuzaki uses the inverse convention for the Cheeger constant.} and \cite[Proposition 4.7]{Mirzakhani1}.

Because $X$ is planar, any candidate subsurface $A$ for $H(X)$ is necessarily planar as well. Suppose $A$ is an $n$-holed sphere. This means that 
\[
\area(A) = 2\pi \cdot (n-2) \quad \text{and} \quad \ell(\partial A) \geq n \cdot \sys(X).
\]
So, $H(X) \geq \frac{n}{n-2}\frac{\sys(X)}{2\pi} \geq \frac{\sys(X)}{2\pi}$ and hence
\[
h(X) \geq 1 - \frac{2\pi}{\sys(X) + 2\pi }.
\]
\end{proof}

Using Cheeger's inequality \cite{Cheeger} (see also \cite[Section IV.3]{Chavel})
%% Cheegers article deals only with the closed case but the proof is the same for non-compact. I can't find a citeable reference for it written explicitly but have seen it cited in many places as e.g. "the proof also holds for  non-compact complete Riemannian manifolds"
%% Found it!
, we obtain:
\begin{prp}\label{prp_gap_of_tree}
Let $X$ be a planar hyperbolic surface without cusps. Then
\[
\lambda_1(X) \geq \frac{1}{4} \cdot \left(1 - \frac{2\pi}{\sys(X) + 2\pi }\right)^2.
\]
\end{prp}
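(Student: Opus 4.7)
The plan is to combine the previous lemma, which bounds the Cheeger constant below in terms of the systole, with Cheeger's inequality
\[
\lambda_1(X) \;\geq\; \frac{h(X)^2}{4}.
\]
Squaring the inequality $h(X)\geq 1-\frac{2\pi}{\sys(X)+2\pi}$ and dividing by $4$ yields the stated bound, so the only real content is to justify that Cheeger's inequality applies in our setting.

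The subtlety is that the surfaces $X$ we care about (in particular, the tree covers $T_\phi$) are planar and typically of infinite area, so $\lambda_1(X)$ must be interpreted as the infimum of the $L^2$-spectrum of the (positive) Laplacian on $X$. I would therefore invoke Cheeger's inequality in the form stated in \cite[Section IV.3]{Chavel}, which is a purely local Rayleigh-quotient argument: for any compactly supported smooth $f$ on $X$, the co-area formula and the definition of $h(X)$ give
\[
\int_X |\nabla f|^2\,dA \;\geq\; \frac{h(X)^2}{4}\int_X f^2\,dA.
\]
Taking the infimum over compactly supported $f$ (and noting that this infimum computes the bottom of the $L^2$-spectrum, since the Laplacian on a complete Riemannian manifold is essentially self-adjoint on $C_c^\infty$) gives $\lambda_1(X)\geq h(X)^2/4$. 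Note that completeness of $X$ is automatic because $X$ is a hyperbolic surface.

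The proof is then a one-line combination: apply the previous lemma and Cheeger's inequality. The only possible obstacle is ensuring the previous lemma's Cheeger-constant bound is valid for infinite-area planar surfaces, but its proof already uses a planar subsurface decomposition via Gauss--Bonnet and the systole bound, which applies regardless of whether the ambient surface has finite area, so there is no additional work.
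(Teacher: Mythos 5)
Your proposal is correct and matches the paper's argument exactly: the paper derives this proposition in one line by applying Cheeger's inequality $\lambda_1(X)\geq h(X)^2/4$ (citing \cite{Cheeger} and \cite[Section IV.3]{Chavel}) to the Cheeger-constant bound from the preceding lemma. Your additional care about interpreting $\lambda_1$ as the bottom of the $L^2$-spectrum on an infinite-area complete surface is a reasonable elaboration of what the paper leaves implicit, not a different route.
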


\section{Eigenfunctions near pants curves}

In this section, we describe the effect of flattening an eigenfunction near a pants curve on the Rayleigh quotient of that function.

\subsection{Delocalization}

One of the inputs we will need, is a delocalization result by Gilmore--Le Masson--Sahlsten--Thomas \cite[Theorem 1.5]{GLMST}:

\begin{thm}[Gilmore--Le Masson--Sahlsten--Thomas]\label{thm_GLMST} Suppose that $X=\Gamma\backslash\HH^2$ is a closed hyperbolic surface of genus $g\geq 2$. Then there exists a universal constant $A>0$ such that
\[
\abs{f(z)}^2 \quad \leq \quad A\cdot e^{-t\cdot \sqrt{\frac{1}{4}-\lambda}} \cdot \card{\st{\gamma\in \Gamma}{\dist_{\HH^2}(z,\gamma \cdot z)\leq t}}
\]
for all $t>0$, $\lambda\in (0,\frac{1}{4})$ and all Laplacian eigenfunctions $f$ of eigenvalue $\lambda$ with $\norm{f}_{L^2}=1$.
\end{thm}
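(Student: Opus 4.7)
The plan is to apply the pretrace (Selberg-type) inequality to a compactly supported radial test kernel and extract the desired bound from positivity of its Selberg transform on the $L^2$-spectrum. First, I would select a nonnegative radial function $k:\RR_{\geq 0}\to\RR_{\geq 0}$ supported in $[0,t]$ (for instance $k=\mathbf{1}_{[0,t]}$, possibly mildly mollified) and form the automorphic kernel
\[
K_t(z,w)\;=\;\sum_{\gamma\in\Gamma}k(\dist_{\HH^2}(z,\gamma w))
\]
on $X=\Gamma\backslash\HH^2$. Since $K_t$ commutes with the Laplacian, the spectral expansion of $K_t$ along the diagonal gives
\[
\sum_j h(\lambda_j)\,|f_j(z)|^2 \;=\; \sum_{\gamma\in\Gamma} k(\dist_{\HH^2}(z,\gamma z)),
\]
where $\{f_j\}$ is an $L^2$-orthonormal Laplacian eigenbasis and $h$ is the Selberg/spherical transform of $k$. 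The spherical functions $\varphi_\mu$ attached to eigenvalues $\mu$ in the full spectrum $[0,\infty)$ are pointwise positive on $[0,t]$, hence $h(\mu)\geq 0$ throughout; discarding every summand except the one at our eigenvalue $\lambda$ and using $k\leq 1$ yields
\[
h(\lambda)\,|f(z)|^2 \;\leq\; \card{\st{\gamma\in\Gamma}{\dist_{\HH^2}(z,\gamma z)\leq t}}.
\]

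Second, the quantitative core is a lower bound on $h(\lambda)$. Writing $\lambda=\frac{1}{4}-s^2$ with $s=\sqrt{\frac{1}{4}-\lambda}\in(0,\frac{1}{2})$, the standard large-$\rho$ behaviour $\varphi_\lambda(\rho)\sim c(s)\,e^{-(\frac{1}{2}-s)\rho}$ together with $\sinh(\rho)\sim\frac{1}{2}e^{\rho}$ shows
\[
h(\lambda) \;=\; 2\pi\int_0^t \varphi_\lambda(\rho)\sinh(\rho)\,d\rho \;\gtrsim\; e^{(\frac{1}{2}+s)t} \;\geq\; e^{st},
\]
with implicit constant independent of $s\in(0,\frac{1}{2})$. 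Substituting this into the previous display produces the theorem, with the universal constant $A$ absorbing the implicit constant.

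The main obstacle is arranging the uniformity of this lower bound in $\lambda\in(0,\frac{1}{4})$: the prefactor $c(s)$ in the spherical-function asymptotic can degenerate at the endpoints $s=0$ and $s=\frac{1}{2}$, so one typically either mollifies $k$ so that $h$ can be analysed directly via the integral representation of $\varphi_\lambda$, or invokes an explicit lower bound for $\varphi_\lambda$ valid uniformly on the exceptional interval. Once this uniformity is secured, the remainder is a routine application of the pretrace inequality as in standard references on spectral methods for arithmetic surfaces.
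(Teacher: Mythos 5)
This statement is quoted from Gilmore--Le Masson--Sahlsten--Thomas \cite[Theorem 1.5]{GLMST}; the paper gives no proof of it, so your proposal can only be measured against the standard argument, which is indeed the pretrace-formula route you outline. The outline is right, but there is a genuine gap in the positivity step. You assert that the spherical functions $\varphi_\mu$ for \emph{all} $\mu\in[0,\infty)$ are pointwise positive on $[0,t]$, hence that the Selberg transform $h$ of $k=\mathbf{1}_{[0,t]}$ is nonnegative on the whole spectrum. This is false for the tempered part: for $\mu>\frac14$, i.e.\ $\mu=\frac14+r^2$ with $r\in\RR$, the spherical function $\varphi_r(\rho)=P_{-1/2+ir}(\cosh\rho)$ oscillates and changes sign (roughly like $e^{-\rho/2}\cos(r\rho+\cdots)$), so for large $t$ the transform $h(\mu)$ of the ball indicator can be negative, and you cannot discard the tempered terms in the pretrace expansion. (There is also an absolute-convergence issue for the spectral side when $k$ is a sharp cutoff.)

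Both problems are repaired by the standard self-convolution trick, which is what the cited source uses: take $k=k_0*k_0$ with $k_0=\mathbf{1}_{[0,t/2]}$ as a point-pair invariant. Then $h=h_0^2\geq 0$ on the entire unitary dual (for real $r$, $h_0(r)$ is real since $\varphi_r$ is; for the complementary series it is positive), convergence of $\sum_j h(r_j)|f_j(z)|^2=\int_X|K_0(z,w)|^2\,dw$ is automatic, and $k$ is supported in $[0,t]$ with $\|k\|_\infty\leq \area(B(0,t/2))\lesssim e^{t/2}$, so the geometric side is at most $Ce^{t/2}\cdot\card{\st{\gamma}{\dist(z,\gamma z)\leq t}}$. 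On the spectral side, writing $\lambda=\frac14-s^2$, the integral representation $\varphi_{is}(\rho)=\frac{1}{2\pi}\int_0^{2\pi}(\cosh\rho+\sinh\rho\cos\theta)^{s-\frac12}\,d\theta\geq e^{-(\frac12-s)\rho}$ gives the uniform bound $h_0(\lambda)\gtrsim e^{(\frac12+s)t/2}$ for all $s\in[0,\frac12]$ — this also disposes of the endpoint-degeneration worry you raise at the end, with no need for asymptotics of $c(s)$. Hence $h(\lambda)\gtrsim e^{(\frac12+s)t}$, and dividing, the factor $e^{t/2}$ cancels and you obtain $|f(z)|^2\leq A\,e^{-st}\,\card{\st{\gamma}{\dist(z,\gamma z)\leq t}}$ as claimed. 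With that substitution your argument is complete and coincides with the proof in the reference.
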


\subsection{Flattening an eigenfunction near a pants curve}

Given the result above, we need a bound on the Rayleigh quotient of a flattened eigenfunction in terms of the $L^\infty$-norm of the eigenfunction. Our main technical input is the Ismagilov--Morgan--Simo--Sigal localization formula \cite[Theorem 3.2]{FKS}:

\begin{thm}[Ismagilov--Morgan--Simo--Sigal]\label{thm_IMSS} Let $M$ be a Riemannian manifold and suppose $\{J_i\}_{i\in\calI}$ is a family of smooth functions $J_i:M\to [0,1]$ such that
\begin{enumerate}
\item $\sum_{i\in\calI} J_i^2 \equiv 1$,
\item on any compact subset $K\subset M$, only finitely many $J_i$ are non-zero, and
\item $\sup_{x\in M} \sum_{i\in\calI} \abs{\nabla J_i(x)}^2 < \infty$.
\end{enumerate}
Then 
\[
\Delta = \sum_{i\in\calI} J_i \Delta J_i - \sum_{i\in\calI} \abs{\nabla J_i}^2.
\]
\end{thm}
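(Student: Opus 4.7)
The formula is an identity of operators, so the plan is to apply both sides to a smooth test function $\psi$ and verify the resulting equation
\[
\Delta\psi \;=\; \sum_{i \in \calI} J_i\,\Delta(J_i\psi) \;-\; \Big(\sum_{i \in \calI} \abs{\nabla J_i}^2\Big)\,\psi
\]
pointwise. At any point $x \in M$, assumption (2) makes every sum below a finite sum in a neighbourhood of $x$, so rearrangements and termwise differentiations are automatic; assumption (3) is not needed for the pointwise identity itself but is what, in the applications of the next section, turns the right-hand side into a bounded perturbation of $\Delta$.

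The key computation is a twofold use of the Leibniz rule for the Laplacian. First I would expand
\[
\Delta(J_i\psi) \;=\; \psi\,\Delta J_i \;+\; J_i\,\Delta\psi \;-\; 2\,\nabla J_i \cdot \nabla\psi
\]
(with the sign convention used in the paper, under which $\Delta$ has non-negative spectrum), multiply by $J_i$, and sum over $i \in \calI$. The partition-of-unity identity $\sum_i J_i^2 \equiv 1$ collapses the $J_i^2\,\Delta\psi$ terms to $\Delta\psi$, while differentiating this same identity once gives $\sum_i J_i\,\nabla J_i \equiv 0$ and so annihilates the cross term. The output of this step is
\[
\sum_{i\in\calI} J_i\,\Delta(J_i\psi) \;=\; \Delta\psi \;+\; \psi\sum_{i\in\calI} J_i\,\Delta J_i.
\]

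To match the stated formula, I would then differentiate $\sum_i J_i^2 \equiv 1$ once more by applying $\Delta$, which yields $0 = \sum_i \Delta(J_i^2) = 2\sum_i (J_i\,\Delta J_i - \abs{\nabla J_i}^2)$, hence $\sum_i J_i\,\Delta J_i = \sum_i \abs{\nabla J_i}^2$. Substituting this back produces exactly the claimed identity. The only real obstacle is not conceptual but bookkeeping: one must keep the signs and order of operations in the Leibniz rule consistent with the paper's spectral convention, and invoke (2) each time a sum is moved past $\nabla$ or $\Delta$ in order to certify that the interchange of the (a priori infinite) sum with a derivative is legitimate.
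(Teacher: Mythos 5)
Your computation is correct: expanding $J_i\Delta(J_i\psi)$ via the Leibniz rule, using $\sum_i J_i\nabla J_i\equiv 0$ to kill the cross terms and $\sum_i J_i\Delta J_i=\sum_i\abs{\nabla J_i}^2$ (from applying $\Delta$ to the partition identity) gives exactly the stated formula, and assumption (2) legitimizes the termwise differentiation. Note that the paper does not prove this statement at all --- it is quoted as a known result of Ismagilov--Morgan--Simon--Sigal with a citation to \cite{FKS} --- so your argument is a self-contained derivation of the standard proof rather than a variant of anything in the text.
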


Given a simple closed geodesic $\gamma$ in a hyperbolic surface $X$ and $w>0$, we will write
\[
C_w(\gamma) = \st{x\in X}{\dist(x,\gamma)\leq w}
\]
for the \textbf{collar} of width $w$ around $\gamma$.  If $C_w(\gamma)$ is isometric to a cylinder $[-w,w]\times \Sphere^1$ with the Riemannian metric 
\[
ds^2 = d\rho^2 + \ell^2 \cosh^2(\rho) dt^2,
\]
where $\ell$ is the length of $\gamma$, we will call the this collar \textbf{standard}. The collar lemma (see \cite{Keen} and \cite[Chaper 4]{Buser}) states that the collar of width $w(\ell) = \mathrm{arcsinh}\left( \frac{1}{\sinh(\ell/2)}\right)$ is always standard. Note that $w(\ell) \sim e^{-\ell/2}$ as $\ell\to \infty$. For our arguments, we will need to work with (long) geodesics that admit standard collars of uniform width.

We will use the following bound, which is the analogue of \cite[Proposition 8.1.]{Magee} in our setting:

\begin{prp}\label{prp_costofswitching}
For all $w>0$ there exists a constant $\eps>0$ such that the following statement holds. Let $X$ be a closed hyperbolic surface and let $\gamma_1,\ldots,\gamma_k \subset X$ be closed geodesics of length $\ell$ such that the collars of width $w>0$ around $\gamma_1,\ldots,\gamma_k$ are standard and disjoint from each other. Moreover, let $f:X\to \RR$ be an $L^2$-normalized Laplacian eigenfunction of eigenvalue $\lambda>0$. Suppose furthermore that\footnote{This is a technical condition we set to make the bound come out nicer. The regime we will be interested in is when this quantity tends to $0$ in any event.} 
\[
\norm{f}_\infty^2 \cdot \ell < \eps.
\]
Then there exists a smooth function $f':X\to \RR$ such that 
\[
f'\vert_{C_{w/2}(\gamma)} \equiv 0, \quad f'\vert_{X-C_w(\gamma)} = f\vert_{X-C_w(\gamma)} \quad \text{ and } \quad \frac{\langle \Delta f',f'\rangle}{\langle f',f'\rangle} - \lambda \leq  B\cdot \norm{f}_\infty^2 \cdot \ell\cdot k,
\]
where $B=B(w)>0$ is a constant depending only on $w$ and $\eps$.
\end{prp}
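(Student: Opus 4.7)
The proof is a standard IMS-localization argument. I would first fix a smooth cutoff $\chi:X\to[0,1]$ that, on each collar $C_w(\gamma_i)$, depends only on $\dist(\cdot,\gamma_i)$, with $\chi\equiv 0$ on $\bigcup_i C_{w/2}(\gamma_i)$, $\chi\equiv 1$ on $X\setminus\bigcup_i C_w(\gamma_i)$, and $\abs{\nabla\chi}\le c_1(w)$. Setting $J_1:=\sin(\pi\chi/2)$ and $J_2:=\cos(\pi\chi/2)$ produces a smooth partition of unity satisfying $J_1^2+J_2^2\equiv 1$, so the hypotheses of Theorem~\ref{thm_IMSS} are met. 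I define $f':=J_1 f$; by construction $f'\equiv 0$ on $\bigcup_i C_{w/2}(\gamma_i)$ and $f'=f$ on $X\setminus\bigcup_i C_w(\gamma_i)$, as required.

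Next I would pair the IMS identity against $f$ in $L^2$. Using $\Delta f=\lambda f$ and $\norm{f}_{L^2}=1$ this yields
\[
\lambda \;=\; \langle\Delta(J_1 f),J_1 f\rangle+\langle\Delta(J_2 f),J_2 f\rangle - \int_X\bigl(\abs{\nabla J_1}^2+\abs{\nabla J_2}^2\bigr)f^2.
\]
Dropping the nonnegative term $\langle\Delta(J_2 f),J_2 f\rangle$ gives $\langle\Delta f',f'\rangle\le\lambda+E$, where $E:=\int_X(\abs{\nabla J_1}^2+\abs{\nabla J_2}^2)f^2$. The algebraic identity $\abs{\nabla J_1}^2+\abs{\nabla J_2}^2=(\pi/2)^2\abs{\nabla\chi}^2$, together with $\abs{\nabla\chi}\le c_1(w)$, $\abs{f}\le\norm{f}_\infty$, and the collar area $\area(C_w(\gamma_i))=2\ell\sinh(w)$, gives $E\le c_2(w)\cdot\norm{f}_\infty^2\cdot\ell\cdot k$; the same area estimate controls $M:=\norm{J_2 f}_{L^2}^2\le c_3(w)\cdot\norm{f}_\infty^2\cdot\ell\cdot k$, and $\norm{f'}_{L^2}^2=1-M$.

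Passing to the Rayleigh quotient one then has
\[
\frac{\langle\Delta f',f'\rangle}{\norm{f'}_{L^2}^2}-\lambda\;\le\;\frac{\lambda M+E}{1-M}.
\]
Taking $\eps$ sufficiently small (depending on $w$) guarantees $1-M\ge 1/2$, at which point the right-hand side is bounded by $2(\lambda M+E)\le B(w)\cdot\norm{f}_\infty^2\cdot\ell\cdot k$, which is the desired conclusion. The main obstacle I anticipate is the last step: justifying that the denominator $1-M$ is bounded below on the basis of the hypothesis $\norm{f}_\infty^2\cdot\ell<\eps$ alone (rather than $\norm{f}_\infty^2\cdot\ell\cdot k<\eps$) requires exploiting the implicit area budget $2k\ell\sinh(w)\le\area(X)$ forced by disjointness of the collars, and checking carefully that the resulting constant $B$ depends only on $w$, with no hidden $k$-dependence.
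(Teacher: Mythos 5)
Your argument is correct and is essentially the paper's own proof: an IMS localization with a cutoff depending only on the distance to the core geodesics, followed by separate estimates of the numerator (dropping the positive term $\langle\Delta(J_2f),J_2f\rangle$, exactly as the paper drops $-\int\Delta(\sqrt{1-J^2}f)\cdot\sqrt{1-J^2}f$) and of the denominator via the explicit standard-collar metric; your $(J_1,J_2)=(\sin(\pi\chi/2),\cos(\pi\chi/2))$ is just a different packaging of the paper's $(J,\sqrt{1-J^2})$. The obstacle you flag at the end is real but is not resolved by the area budget: disjointness only gives $2k\ell\sinh(w)\leq\area(X)$, and since $\norm{f}_\infty^2\cdot\area(X)\geq 1$ for an $L^2$-normalized $f$, this cannot force $M\leq 1/2$ from the hypothesis $\norm{f}_\infty^2\cdot\ell<\eps$ alone. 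The paper has exactly the same gap (it proves the case $k=1$ and asserts that ``the final errors multiply by $k$,'' which for the denominator implicitly requires smallness of $\norm{f}_\infty^2\cdot\ell\cdot k$); in the application $k\leq 2$ (the lifts of a single pants curve in a degree-two cover), so the point is harmless, and the honest fix is simply to state the smallness hypothesis with the factor $k$ included. The same remark applies to the numerator term $\lambda M$: your constant, like the paper's, tacitly uses that $\lambda$ is bounded (here $\lambda\leq 1/4$ in the intended application).
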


\begin{proof} Once and for all, fix a symmetric smooth function $J_0:\RR \to [0,1]$ such that 
\[
J_0(x) = \left\{ \begin{array}{ll}
1 & \text{if } \abs{x} \geq 2, \\[2mm]
0 & \text{if } \abs{x} \leq 1.
\end{array}\right.
\]
Using this function, we define $J:C_w(\gamma) \to \RR$ by
\[
J(x) = J_0\left(\frac{2\cdot \dist(x,\gamma)}{w}\right) 
\]
and define $f':X\to \RR$ by
\[
f'(x) = \left\{ 
\begin{array}{ll}
J(x) \cdot f(x) & \text{if } x\in C_w(\gamma) \\[2mm]
f(x) & \text{otherwise}.
\end{array}
\right.
\]
$f'$ satsifies the first two thirds of the claim, so we need to verify the last third.

We will work with one curve (i.e. $k=1$), because we assume disjointness, the argument in the general case is the same, except that the final errors multiply by $k$.

We now first estimate the numerator of the Rayleigh quotient. Using Theorem \ref{thm_IMSS}, we obtain:
\begin{align*}
\langle\Delta f',f'\rangle - \langle \Delta f,f\rangle & = \int_{C_w(\gamma)} (\Delta J f) \cdot J f - (\Delta  f) \cdot  f \\[2mm]
& =  \int_{C_w(\gamma)} \Big( \abs{\nabla J}^2 + \abs{\nabla (\sqrt{1-J^2})}^2\Big)\cdot  f^2 \\
& \quad -\int_{C_w(\gamma)}  \Delta (\sqrt{1-J^2} f) \cdot \sqrt{1-J^2}f \\[2mm]
& \leq \int_{C_w(\gamma)} \Big( \abs{\nabla J}^2 + \abs{\nabla (\sqrt{1-J^2})}^2\Big)\cdot  f^2 \\[2mm]
& \leq \norm{f}^2_\infty \cdot \int_{C_w(\gamma)} \Big( \abs{\nabla J}^2 + \abs{\nabla (\sqrt{1-J^2})}^2\Big)
\end{align*}
Here we have used the fact that $\Delta$ is a positive operator in order to get from the second to the third line.

Now, using the fact $C_w(\gamma)$ is standard and that the function $J$ depends on the distance to the core curve only, we get that
\begin{multline*}
\int_{C_w(\gamma)} \Big( \abs{\nabla J}^2 + \abs{\nabla (\sqrt{1-J^2})}^2\Big) \\ = \ell\cdot\int_{-w}^{w} \left[\left(\frac{d}{d\rho} J_0\left(\frac{\rho}{w}\right)\right)^2 + \left(\frac{d}{d\rho} \sqrt{1-J_0\left(\frac{\rho}{w}\right)^2}\right)^2\right]\cdot \cosh(\rho) d\rho \\
= \frac{\ell}{w} \int_{-1}^1 \left[ \left(\frac{dJ_0(y)}{dy}\right)^2 + \left(\frac{d}{dy}\sqrt{1-J_0(y)^2}\right)^2\right] \cdot \cosh(w\cdot y) dy  \\
\leq A_1\cdot \frac{\ell \cdot \cosh(w)}{w},
\end{multline*}
where $A_1>0$ is a constant depending on our choice of $J_0$ alone. The division by $w$ in the beforelast line comes from the combination of the chain rule and the substitution rule. All in all, we obtain
\[
\langle\Delta f',f'\rangle \leq \langle \Delta f,f\rangle + A_1 \cdot \norm{f}_\infty^2 \cdot  \ell.
\]
Likewise, for the denominator, we have:
\[
 \langle f',f'\rangle = \langle f,f\rangle - \int_{C_\gamma} (1-J^2) f^2 \geq 1- \norm{f}_\infty^2 \cdot \int_{C_\gamma} (1-J^2) \geq 1- A_2\cdot \norm{f}_\infty^2 \cdot \ell \cdot w \cdot \cosh(w)
\]
for some constant $A_2>0$ depending on $J_0$ only.

This means that, setting $E=A_3\cdot \norm{f}_\infty^2 \cdot \ell$ (where $A_3>0$ is large enough to absorb the previous constants and the factors depending on $w$),
\begin{align*}
\frac{\langle \Delta f',f'\rangle}{\langle f',f'\rangle} \leq \frac{\langle \Delta f,f\rangle + E}{\langle f,f\rangle -E} = \frac{\lambda+E}{1-E} \leq \lambda + A_4\cdot E,
\end{align*}
for some constant $A_4>0$, where we've used the geometric series around $E=0$ and our assumption that  $\norm{f}_\infty^2 \cdot \ell<\eps$. Here we can choose any $\eps \in (0,1/A_3)$.
\end{proof}

\section{Building good surfaces}

The goal of this section is to prove the main result of this article: we can produce sequences of covers of the Bolza surface whose spectral gaps are dense in $[0,\frac{1}{4}]$. We will use a variant of the switching technique from \cite{Magee}, based on the model of random surfaces described in Section \ref{sec_model}.

\subsection{The order of constants}\label{sec_constants}

There are many constants involved in our construction and even though they're uniform, the order in which they're fixed matters. So we will record this order here:

\begin{itemize}
\item[\namedlabel{cst_eta}{\textbf{C1}}] We fix a constant $\eta>0$. Our goal will be to build a set of covers whose spectral gaps are $\eta$-dense in $[0,\frac{1}{4}]$.
\item[\namedlabel{cst_collarwidth}{\textbf{C2}}] We also once and for all fix a parameter $w>0$ (independent of $\eta$).  This number appears as a collar width in Proposition \ref{prp_costofswitching} and determines another parameter $\eps>0$ and a third number $B=B(w)$ that both appear in that same proposition.
\item[\namedlabel{cst_length}{\textbf{C3}}] Given all the parameters above, we fix any $\ell>4w$ such that
\[
\frac{1}{4}\cdot \left(1-\frac{2\pi}{\ell+2\pi}\right)^2 > \frac{1}{4}-\frac{1}{2}\cdot \eta \quad \text{and} \quad \ell\cdot e^{-\ell \cdot \sqrt{\eta}} < \min\{\eps/A,\eta/B\},
\]
where $B=B(w)$ is the constant that appears in Proposition \ref{prp_costofswitching} and $A$ the constant that appears in Theorem \ref{thm_GLMST}.
\end{itemize}

\subsection{Building a good tree cover}

From hereon out the parameters chosen in the previous subsection will be considered fixed. 

We will now first prove that we can find good tree covers:

\begin{prp}\label{prp_good_tree_cover}
The Bolza surface admits a tree cover $T_\phi\to X_B$, coming from a homeomorphism $\phi:X_B\to X_B$ such that the following conditions hold:
\begin{enumerate}
\item\label{item_sys} $\sys(T_\phi) > \ell$,
\item\label{item_collar} the collars of width $w$ around all geodesics in the canonical pants decomposition of $T_\phi$ are all standard, and
\item\label{item_gap} $\lambda_1(T_\phi) > \frac{1}{4}-\frac{1}{2}\cdot \eta$.
\end{enumerate}
\end{prp}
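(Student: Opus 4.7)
The plan is to reduce (3) to (1) via Proposition \ref{prp_gap_of_tree}, to translate (1) and (2) into downstairs conditions on the pants decomposition $\calQ := \phi^{-1}(\calP_0) \subset X_B$, and then to realize those via Mirzakhani's equidistribution theorem together with a genericity argument.

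First I would dispose of (3). Since $T_\phi$ is planar (it is a tree cover of a closed surface) and has no cusps, Proposition \ref{prp_gap_of_tree} applies and yields
\[
\lambda_1(T_\phi) \geq \frac{1}{4}\left(1 - \frac{2\pi}{\sys(T_\phi) + 2\pi}\right)^2,
\]
which, by the first inequality in \ref{cst_length}, exceeds $\frac{1}{4} - \frac{\eta}{2}$ whenever $\sys(T_\phi) > \ell$. So it suffices to produce $\phi$ satisfying (1) and (2).

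Next I would pass downstairs to $X_B$. The covering $T_\phi \to X_B$ is a local isometry, so lifts of closed geodesics preserve lengths and lifts of embedded standard collars remain embedded standard collars of at least the same width. Moreover, since the meridians of $H$ normally generate $\ker \iota_*$, one has $\ker(\iota \circ \phi)_* = N(\calQ) := \langle\langle \phi^{-1}(\alpha_1), \phi^{-1}(\alpha_2), \phi^{-1}(\alpha_3) \rangle\rangle_{\Gamma_B}$, and $\sys(T_\phi)$ equals the infimum of translation lengths on $\HH^2$ of non-trivial elements of $N(\calQ)$. Thus it is enough to find a $\calQ$ (in the $\MCG(X_B)$-orbit of $\calP_0$) whose lifts to $T_\phi$ have pairwise disjoint standard collars of width $w$ and for which $N(\calQ)$ contains no element of translation length $\leq \ell$.

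To produce the geometric data, I would apply Mirzakhani's equidistribution for the $\MCG(X_B)$-orbit of $\calP_0$ in $\mathrm{PML}(X_B)$ with respect to the Thurston measure \cite{Mirzakhani2}. A generic orbit element has all components long in $X_B$; and although the downstairs $X_B$-collar widths of very long curves are necessarily small (by the collar lemma and the fixed area of $X_B$), in the cover $T_\phi$ the collars can be widened because the curves in $X_B$ that limit a downstairs collar need not lift to nearby curves in $T_\phi$. A positive Thurston-measure set of $\calQ$'s has the property that this widening in $T_\phi$ suffices to give standard collars of width $w$.

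The main obstacle is arranging the systole bound (1) simultaneously with the geometric conditions. By compactness of $X_B$ there are only finitely many free homotopy classes $[\bar\gamma_1], \ldots, [\bar\gamma_N]$ of closed geodesics of length $\leq \ell$ in $X_B$. For each $j$, the constraint $[\bar\gamma_j] \in N(\calQ)$ forces, at the abelian level, the image of $[\bar\gamma_j]$ in $H_1(H; \ZZ) \cong \ZZ^2$ (under the abelianization of $\iota \circ \phi$) to vanish, and when $\bar\gamma_j$ is null-homologous to satisfy a further nilpotent-quotient condition -- in both cases a thin condition on $\calQ$ within the $\MCG$-orbit. The hard part is to combine Mirzakhani's positive-measure geometric set with the complement of these finitely many thin algebraic loci via a density argument to extract a single $\calQ$, and hence $\phi$, satisfying all three conditions at once.
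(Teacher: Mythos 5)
Your reduction of (3) to (1) via Proposition \ref{prp_gap_of_tree} and the first inequality in \ref{cst_length} is exactly right, and you correctly identify Mirzakhani's theorem as the source of the pants decomposition. But the actual content of the proposition is items (1) and (2), and there your argument has genuine gaps. For the collars, ``a positive Thurston-measure set of $\calQ$'s has the property that this widening in $T_\phi$ suffices'' is an assertion, not a proof: you give no mechanism relating the geometry of $\calQ$ downstairs to collar widths upstairs. For the systole, your plan is to rule out, for each of the finitely many classes $[\bar\gamma_j]$ of length $\le\ell$ in $X_B$, the algebraic condition $[\bar\gamma_j]\in\ker(\iota\circ\phi)_*$ by avoiding finitely many ``thin loci''; you yourself flag the combination of this with the positive-measure geometric set as ``the hard part'' and do not carry it out. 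This step is not routine: membership in the normal closure of the meridians is not detected by the homological and nilpotent-quotient conditions you name (these are only necessary conditions for membership, so their failure does not define the locus you must avoid), and it is not even clear a priori that no short class lies in $\ker(\iota\circ\phi)_*$ for \emph{every} $\phi$.

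The paper replaces both steps by a single concrete geometric argument, and the key input from Mirzakhani's theorem that you miss is not that the curves of $\phi^{-1}(\calP_0)$ can be made long, but that they can simultaneously be made of \emph{comparable} lengths (all pairwise ratios in $(1-\delta,1+\delta)$) --- this is what the strict positivity of the density of the limiting normalized length vector on the interior of $\sigma_3$ buys. With all three pants curves of length roughly $L$, any arc in a pair of pants with endpoints on a boundary component that is essential rel endpoints has length at least roughly $L/2$: together with the shorter of the two boundary arcs it cuts off, it must be at least as long as one of the other two boundary curves. Since every closed geodesic of $T_\phi$ is either a curve of the canonical pants decomposition or contains at least two such arcs, this gives $\sys(T_\phi)\geq (1-\delta)L$ directly, with no reference to which elements of $\Gamma_B$ lie in the kernel; and since a non-standard collar of width $u$ produces such an essential arc of length at most $2u$, the same estimate gives standardness of collars of width up to roughly $L/4$. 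You would need to supply an argument of this kind (or genuinely complete your algebraic route, which looks considerably harder) for the proof to go through.
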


\begin{proof}
First of all, Mirakhani \cite[Theorem 1.2]{Mirzakhani2} determined the asymptotic distribution of the normalized length vector
of a uniform random pants decomposition 
\[
\calP_L \in \st{\calP \in \MCG(X_B)\cdot \calP_0}{\sum_{\alpha\in\calP} \ell_\alpha(X_B) \leq L},
\]
where $\MCG(X_B)$ denotes the mapping class group of $X_B$. The normalized length vector of a pants decomposition is the vector
\[
\overline{\ell}(\calP_L) =   \left( \ell_\alpha(X_B) \Big/\; \sum_{\beta\in\calP_L} \ell_\beta(X_B) \right)_{\alpha \in \calP_L} \in \sigma_3=\st{x\in\RR^3}{\begin{array}{c}x_i\geq 0,\; i= 1,2,3 \\ \text{and}\; x_1+x_2+x_3=1 \end{array}}.
\]
Mirzakhani proved that, as $L\to\infty$, this random vector converges in distribution to a random vector in $\sigma_3$ distributed according to a measure that can be explicitly computed. This was generalized to other multi-curves by Arana-Herrera \cite{AranaHerrera} and Liu \cite{Liu}.

For us, the only thing that matters is that the limit measure is absolutely continuous with respect to the Lebesgue measure with a Radon--Nikodym derivative that is strictly positive in the interior of $\sigma_3$. This implies that for all $L>0$ and all $\delta>0$, we can find a homeomorphism $\phi:X_B\to X_B$ such that the pants decomposition $\phi^{-1}(\calP_0) = (\beta_1,\beta_2,\beta_3)$ satifies
\[
\ell(\beta_i) \geq L \quad \text{and} \quad 1-\delta < \frac{\ell(\beta_i)}{\ell(\beta_j)} < 1+\delta \quad \text{for all }i,j \in\{1,2,3\} 
\]
We claim that the tree cover $T_\phi \longrightarrow X_B$ has the properties we require, as soon as $L > \ell/(1-\delta)$.

We start with item \ref{item_sys}: the systole.  First observe that, if the systole corresponds to a pants curve in the canonical pants decomposition of $T_\phi$, then it automatically satisfies the inequality above. If it does not, the geodesic realizing it contains at least two arcs that each run between two points on one of the boundary components of one of the pairs of pants in the pants decomposition. Indeed, the systole is necessarily simple, so it crosses at least two pairs of pants. As such the path it traces in the tree dual to the canonical pants decomposition has at least two leaves. So the length of the systole is at least twice the length of such an arc. All that remains is to find a lower bound on this length.

To do this, consider the arc $\alpha$ in Figure \ref{pic_pants}.
\begin{figure}[!h]
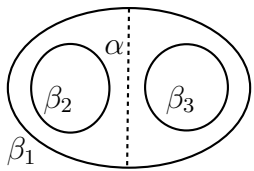

\begin{overpic}{pic_pants}
\put(40,47){$\alpha$}
\put(0,4){$\beta_1$}
\put(15,25){$\beta_2$}
\put(65,25){$\beta_3$}
\end{overpic}
\caption{A pair of pants}\label{pic_pants}
\end{figure}
The arc $\alpha$ cuts the boundary component $\beta_1$ into two arcs, the shortest one of which has length at most $\frac{1+\delta}{2}L$. If we suppose this shortest arc lies on the side of $\beta_2$, we obtain
\[
\ell(\alpha) + \frac{1+\delta}{2}L \geq \ell(\beta_2) \geq L, 
\]
which gives us $\ell(\alpha) \geq \frac{1-\eps}{2}L$ and hence 
\[
\sys(T_\phi) \geq (1-\delta)\cdot L > \ell.
\]

We prove item \ref{item_collar} using essentially the same argument. Indeed, if a collar of width $u$ around one of the pants curves is not standard, then this yields a simple arc of length at most $2u$ between two points on the corresponding pants curve. This arc must be essential relative to its endpoints and by the same argument as above, such an arc has length at least $\ell/2$. So, the collars of width $\ell/4>w$ (by assumption \ref{cst_length}) are standard.

Finally, item \ref{item_gap} follows from the second part of the same assumption \ref{cst_length}, combined with Proposition \ref{prp_gap_of_tree}.
\end{proof}

\subsection{Strong convergence}

To obtain covers of $X_B$ whose spectral gap is nearly optimal we will rely on the techniques developed by Hide--Magee \cite{HideMagee,HideLouderMagee} based on strong convergence of permutation representations.

Let $\Gamma$ be a finitely generated group and suppose $(\calH_n)_n$ is a sequence of Hilbert spaces. Given a sequence of unitary representations $\rho_n:\Gamma \to U(\calH_n)$, where $U(V\calH_n)$ denotes the unitary group of $\calH_n$, we say that this sequence \textbf{strongly converges} to $\rho_{\infty}:\Gamma \to U(\calH_\infty)$ if
\[
\lim_{n\to\infty} \norm{\rho_n(z)} = \norm{\rho_\infty(z)} 
\]
for all $z\in\CC[\Gamma]$.

We will need the following result which is a version of the results in \cite{Magee_survey}, specialized to our set-up:

\begin{thm}\label{thm_HideMagee} Let $\Gamma<\PSL(2,\RR)$ be a uniform lattice and let $\Lambda$ be a group. Moreover suppose that
\begin{itemize}
\item $p:\Gamma \to \Lambda$ is an epimorphism and write $K=\ker(p) \triangleleft \Gamma$, 
\item $\Big(\rho_n\in \Hom(\Lambda,\sym_n)\Big)_{n\geq 1}$ is a sequence of permutation representations such that
\[
\mathrm{std}_n\circ \rho_n \stackrel{\text{strong}}{\longrightarrow} \rho_{\mathrm{reg}}, \quad \text{as }n\to\infty
\]
where $\mathrm{std}_n:\sym_n \to U(\CC^{n-1})$ denotes the $(n-1)$-dimensional irreducible representation and $\rho_{\mathrm{reg}}:\Lambda\to \calB(\ell^2(\Lambda))$ denotes the left regular representation, and
\item write $\Gamma_n = \stab_{\rho_n\circ p}(\{1\}) < \Gamma$
\end{itemize}
Then
\[
\lim_{n\to\infty}\lambda_1(\Gamma_n\backslash\HH^2) = \min\Big\{\lambda_1(K\backslash\HH^2),\lambda_1(\Gamma\backslash \HH^2)\Big\}.
\]
\end{thm}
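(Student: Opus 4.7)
The plan is to decompose the Hilbert space of $\Gamma_n\backslash\HH^2$ into a ``base'' part and a ``new'' part, reduce the convergence of $\lambda_1$ to a convergence of heat-operator norms on the new part, and then extract this from strong convergence by a truncation argument.

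First step: decomposition. The defining representation of $\sym_n$ on $\CC^n$ splits $\Gamma$-equivariantly as $\mathbf{1}\oplus\mathrm{std}_n$ (via $\rho_n\circ p$), so taking sections of the associated flat bundles yields a Laplacian-invariant isometric splitting
\[
L^2(\Gamma_n\backslash\HH^2) \;=\; L^2(\Gamma\backslash\HH^2)\;\oplus\;V_n,
\]
where $V_n$ is the space of sections of the flat bundle on $\Gamma\backslash\HH^2$ associated to $\mathrm{std}_n\circ\rho_n\circ p$. Writing $\lambda_1^{\mathrm{new}}(n)$ for the bottom of $\mathrm{spec}(\Delta\vert_{V_n})$, this gives
\[
\lambda_1(\Gamma_n\backslash\HH^2) \;=\; \min\bigl\{\lambda_1(\Gamma\backslash\HH^2),\,\lambda_1^{\mathrm{new}}(n)\bigr\},
\]
and the theorem reduces to proving $\lambda_1^{\mathrm{new}}(n)\to\lambda_1(K\backslash\HH^2)$.

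Second step: heat operators. Since $\Delta\vert_{V_n}$ is positive self-adjoint, $\|e^{-t\Delta}\vert_{V_n}\|=e^{-t\,\lambda_1^{\mathrm{new}}(n)}$, and the analogous identity holds on $K\backslash\HH^2$. Unfolding using the $\HH^2$-heat kernel $K_t$, the operator $e^{-t\Delta}\vert_{V_n}$ is the integral operator over a fundamental domain for $\Gamma$ with kernel
\[
(z,w)\;\longmapsto\;(\mathrm{std}_n\circ\rho_n)\!\left(\sum_{\gamma\in\Gamma} K_t(z,\gamma w)\,p(\gamma)\right),
\]
where the formal sum is interpreted in a suitable completion of $\CC[\Lambda]$. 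Replacing $\mathrm{std}_n\circ\rho_n$ by $\rho_{\mathrm{reg}}$ yields precisely the heat operator on the flat $\ell^2(\Lambda)$-bundle over $\Gamma\backslash\HH^2$, which is isometric to $L^2(K\backslash\HH^2)$ since $K\backslash\HH^2\to\Gamma\backslash\HH^2$ is a $\Lambda$-Galois cover. It therefore suffices to prove $\|e^{-t\Delta}\vert_{V_n}\|\to e^{-t\,\lambda_1(K\backslash\HH^2)}$ and take $-\tfrac{1}{t}\log$.

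Third step: truncate and apply strong convergence. The hypothesis gives $\|\mathrm{std}_n\circ\rho_n(z)\|\to\|\rho_{\mathrm{reg}}(z)\|$ for any $z\in\CC[\Lambda]$, but our kernel is an infinite sum. I would truncate by fixing $R>0$ and keeping only $\gamma$ with $\dist_{\HH^2}(z,\gamma w)\leq R$: for each fixed $R$, the truncated element lies in $\CC[\Lambda]$ and strong convergence applies. The tail is controlled uniformly in $n$ by Gaussian decay of $K_t$ together with the hyperbolic lattice-point count $\#\{\gamma\in\Gamma:\dist_{\HH^2}(z,\gamma z)\leq R\}=O(e^R)$, giving a tail bound of order $e^{-R^2/(4t)+R}$ that is independent of the representation. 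Letting $n\to\infty$ first and then $R\to\infty$ yields the desired convergence of operator norms, hence of $\lambda_1^{\mathrm{new}}(n)$.

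The main obstacle is exactly this uniform-in-$n$ passage from the polynomial setting of strong convergence to the infinite-sum heat operator. This requires a tail estimate depending only on intrinsic data of the base lattice $\Gamma$---pointwise bounds on $K_t$ and volume growth on $\HH^2$---and not on the specific permutation representation $\rho_n$. Once this uniformity is verified, combining strong convergence of each truncation with the tail bound gives operator-norm convergence, and the spectral-bottom convergence follows by taking logarithms.
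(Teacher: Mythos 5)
Your overall architecture---splitting $L^2(\Gamma_n\backslash\HH^2)$ into the pullback of $L^2(\Gamma\backslash\HH^2)$ and a ``new'' part $V_n$, passing to heat operators, and comparing kernels by truncation plus a uniform tail bound---is sound, and it is essentially the pre-induction route of \cite{HideMagee,HideLouderMagee}; the paper instead sketches Magee's streamlined argument, which induces the representations up to $\PSL(2,\RR)$ and invokes \cite[Theorem 3.1 and Proposition 2.9]{Magee_survey}. Your tail estimate is correct and genuinely uniform in $n$ (Schur test plus $\norm{\pi(\gamma)}=1$ and Gaussian decay of $K_t$), and the identification of the $\rho_{\mathrm{reg}}\circ p$-bundle with $L^2(K\backslash\HH^2)$ is right. (Two small caveats you should state: the splitting with $V_n$ the $\mathrm{std}_n$-bundle requires $\rho_n\circ p$ to be transitive, and reading $\lambda_1(\Gamma_n\backslash\HH^2)$ off the splitting requires $0\notin\mathrm{spec}(\Delta\vert_{V_n})$; both hold for large $n$ in the relevant setting because $\Lambda=F_2$ is non-amenable, but neither is automatic from the hypotheses as stated.)

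The genuine gap is in your third step, and it is not where you locate it. After truncation the kernel is \emph{not} a single element of $\CC[\Lambda]$: the truncated operator is $\sum_{\gamma} T_\gamma\otimes\pi(\gamma)$ acting on $L^2(\mathcal{F})\otimes V_\pi$, where $T_\gamma$ is the integral operator on a fundamental domain $\mathcal{F}$ with kernel $K_t(z,\gamma w)\ind_{\dist(z,\gamma w)\leq R}$ and $\pi$ runs over $\mathrm{std}_n\circ\rho_n\circ p$ and $\rho_{\mathrm{reg}}\circ p$. The hypothesis only gives $\norm{\pi_n(z)}\to\norm{\pi_\infty(z)}$ for \emph{scalar} coefficients; to conclude $\norm{\sum_\gamma T_\gamma\otimes\pi_n(\gamma)}\to\norm{\sum_\gamma T_\gamma\otimes\pi_\infty(\gamma)}$ you need strong convergence \emph{with operator coefficients}. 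That upgrade is true but is a theorem rather than a formality: the upper bound uses that the limit is the regular representation (Fell absorption, or the Haagerup--Thorbj\o rnsen--Pisier linearization trick) and the lower bound uses weak containment; it is exactly the content of the induction theorem \cite[Theorem 3.1]{Magee_survey} that the paper's proof cites. By contrast, the uniform-in-$n$ tail bound that you single out as ``the main obstacle'' is the easy part. So your route closes once you either prove or quote the operator-coefficient version of strong convergence, but the one step you dismiss with ``strong convergence applies'' is where the actual mathematical content of the theorem lives.
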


\begin{proof}[Proof sketch]
The precise statement is not yet available in the literature although it can be derived using small modifications of the arguments in \cite{HideMagee,HideLouderMagee,MageeThomas}. A representation theoretic argument due to Magee \cite{Magee_survey} is perhaps the fastest way to obtain it. We will very briefly describe how this works here.

The fact that $\mathrm{std}\circ \rho_n \stackrel{\text{strong}}{\longrightarrow} \rho_{\mathrm{reg}}$, implies that
\[
\mathrm{std}_n\circ \rho_n \circ p \quad\stackrel{\text{strong}}{\longrightarrow}\quad \rho_{\mathrm{reg}}\circ p : \Gamma \to \calB(\ell^2(\Lambda)).
\]
Magee \cite[Theorem 3.1]{Magee_survey} proves that this implies that:
\[
\mathrm{Ind}_\Gamma^{\PSL(2,\RR)}\Big(\mathrm{std}_n\circ \rho_n \circ p\Big) \stackrel{\text{strong}}{\longrightarrow} \mathrm{Ind}_\Gamma^{\PSL(2,\RR)}\Big(\rho_{\mathrm{reg}}\circ p\Big) .
\]
By \cite[Proposition 2.9]{Magee_survey}, this implies our statement.
\end{proof}

Strongly convergent sequences of representations are notoriously hard to find. However, the following theorem by Bordenave--Collins \cite{BordenaveCollins} gives us what we need:
\begin{thm}[Bordenave--Collins]\label{thm_BordenaveCollins}
Let $F$ be a finitely generated non-abelian free group and let $\rho_n \in \Hom(F,\sym_n)$ be chosen uniformly at random. Then
\[
\mathrm{std}_n\circ \rho_n \stackrel{\text{strong}}{\longrightarrow} \rho_{\mathrm{reg}}, \quad \text{as }n\to\infty
\]
in probability.
\end{thm}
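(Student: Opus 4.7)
The plan is to prove strong convergence by separately establishing a lower bound and an upper bound on the operator norms $\norm{\mathrm{std}_n(\rho_n(z))}$. Fix $F = F_r$ with free generators $x_1,\ldots,x_r$, so that $\rho_n$ is determined by independent uniformly random permutations $\sigma_1^{(n)},\ldots,\sigma_r^{(n)} \in \sym_n$, and recall that $\mathrm{std}_n$ is the restriction of the permutation action on $\CC^n$ to the hyperplane orthogonal to the all-ones vector.

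For the lower bound, the first step is to show that for each fixed $z \in \CC[F]$,
\[
\tfrac{1}{n-1}\,\tr{\mathrm{std}_n(\rho_n(z))} \;\longrightarrow\; \tau_{\mathrm{reg}}(z) \quad \text{in probability, as } n \to \infty,
\]
where $\tau_{\mathrm{reg}}$ denotes the canonical trace on the reduced $C^*$-algebra of $F$. By linearity this reduces to the case of words $w \in F$, for which the trace equals $\abs{\Fix(\rho_n(w))} - 1$, and Nica's theorem \cite{Nica} gives $\EE[\abs{\Fix(\rho_n(w))}] = d(w) + O(1/n)$ with $d(w)$ a small combinatorial constant determined by the root structure of $w$ in $F$. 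Concentration follows from a bounded-differences argument (swapping two values of one of the $\sigma_i^{(n)}$ moves the fixed-point count by at most a constant depending on the length of $w$). Given trace convergence at all integer powers $k$, the elementary inequality $\lambda_{\max}(T) \ge (\tr{T^k}/(n-1))^{1/k}$ applied to the non-negative operator $T_n = \mathrm{std}_n(\rho_n(zz^*))$ yields $\liminf_n \norm{T_n} \ge (\tau_{\mathrm{reg}}((zz^*)^k))^{1/k}$, and letting $k \to \infty$ produces $\liminf_n \norm{\mathrm{std}_n(\rho_n(z))} \ge \norm{\rho_{\mathrm{reg}}(z)}$.

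The upper bound is the harder half. Working with the same $T_n = \mathrm{std}_n(\rho_n(zz^*))$, the moment inequality $\norm{T_n}^k \le \tr{T_n^k}$ with $k = k(n)$ growing slowly reduces the task to bounding $\EE[\tr{T_n^k}]$. Expanding this trace as an iterated sum of fixed-point counts of $\rho_n$ on words, one obtains a weighted sum over closed walks of length proportional to $k$ in the Cayley graph of $F$. The crucial organizational step is to group walks by the isomorphism class of their ``shape'', namely the quotient graph obtained by identifying vertices visited together: tree-shaped walks reproduce $(n-1)\,\tau_{\mathrm{reg}}((zz^*)^k)$ exactly, while each additional independent cycle in the quotient shape costs an extra factor of $n^{-1}$. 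The main obstacle is controlling the combinatorial explosion in the number of non-tree shapes of length $\sim k$ against these $n^{-1}$ suppressions, which is achieved in Bordenave--Collins by a polynomial linearization combined with careful spectral estimates for auxiliary non-backtracking random operators. Once these are in place, choosing $k(n) = \Theta(\log n)$ extracts the norm from the moment, and a bounded-differences concentration upgrades the expectation bound to an in-probability one; extending from self-adjoint $z$ to general $z \in \CC[F]$ is formal via the identity $\norm{\mathrm{std}_n(\rho_n(z))}^2 = \norm{\mathrm{std}_n(\rho_n(zz^*))}$.
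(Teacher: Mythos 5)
The paper does not prove this statement at all: Theorem~\ref{thm_BordenaveCollins} is imported as a black box from Bordenave--Collins, so there is no internal proof to compare against. Judged on its own terms, your outline correctly reproduces the architecture of the actual Bordenave--Collins argument: split the norm convergence into a lower and an upper bound for $T_n=\mathrm{std}_n(\rho_n(zz^*))$. The lower-bound half is essentially complete and standard --- trace convergence from Nica's fixed-point asymptotics plus an Azuma/bounded-differences bound on $\abs{\Fix(\rho_n(w))}$ (which is legitimate there, since you divide by $n-1$), followed by $\liminf_n\norm{T_n}\geq \tau_{\mathrm{reg}}((zz^*)^k)^{1/k}$ and faithfulness of the canonical trace on $C^*_{\mathrm{red}}(F)$. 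But the upper-bound half, which is the entire content of the theorem, is not proved: you correctly identify that the obstacle is the combinatorial explosion of non-tree-shaped closed walks against the $n^{-1}$ suppressions, and then you resolve it by writing that this ``is achieved in Bordenave--Collins by a polynomial linearization combined with careful spectral estimates for auxiliary non-backtracking random operators.'' That sentence is an attribution, not an argument; as a proof the proposal therefore has a genuine gap exactly at the theorem's crux.

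One technical misstep worth flagging: for the upper bound you propose to ``upgrade the expectation bound to an in-probability one'' by bounded differences applied to the operator norm. That does not work as stated. Changing one value of a generator permutation perturbs $\rho_n(x_i)$ by an operator of rank at most two, so $\norm{T_n}$ can move by a constant $C(z)$ that does not shrink with $n$; Azuma/McDiarmid then only localizes $\norm{T_n}$ to within $O(\sqrt{n})$ of its mean, which is useless for a quantity of order $1$. The standard (and Bordenave--Collins's) route is to skip concentration of the norm entirely and apply Markov's inequality directly to $\tr{T_n^{k}}$ with $k\asymp\log n$: the factor $(1+\eps)^{k}$ then beats the polynomial prefactors and yields the high-probability upper bound on $\norm{T_n}$ in one step.
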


\subsection{Covers of degree two and Hamming distance}

Using good tree covers, combined with the existence of strongly convergent sequences of permutation representations of $F_2$ and Theorem \ref{thm_HideMagee} above, we can already use our construction to build sequences of finite covers of $X_B$ whose spectral gaps are arbitrarily close to $\frac{1}{4}$. As such, we need to explain how to get intermediate spectral gaps, which is what the remainder of the text is devoted to.

From hereon out, we will suppose that $\phi:X_B\to X_B$ is a homeomorphism such that $T_\phi$ satisfies the result from Proposition \ref{prp_good_tree_cover}. Recall that the sequence $\Big(Y_\phi^{(n)}\to X_B\Big)_{n\geq 1}$ denotes the associated sequence of random covers (see Section \ref{sec_model}).

In order to obtain finite covers of $X_B$ whose spectral gaps are $\eta$-dense, we will use a variation of the switching argument of Magee. Magee phrases the argument in terms of bundles over $Y_\phi^{(n)}$ coming from homomoprhisms $\pi_1(Y_\phi^{(n)})\to\ZZ/2\ZZ$. We will phrase it entirely in terms of covers of degree two of $Y_\phi^{(n)}$, which is an equivalent point of view. 

First of all, we will restrict to covers that factor through the handle body attachment. That is, the groups defined in Section \ref{sec_rand_cov} fit in the following commutative diagram:
\begin{center}
\begin{tikzcd}
 \Lambda_\phi^{(n)} \arrow[hookrightarrow]{r}\arrow[d,"(\iota\circ\phi)_*"] & \Gamma_B \arrow[d,"(\iota\circ\phi)_*"] \\
 H^{(n)} \arrow[hookrightarrow]{r} & F_2
\end{tikzcd}
\end{center}
which allows us to define the set
\[
\calH_\phi^{(n)} = \st{\rho\circ(\iota\circ\phi)_*}{\rho\in\Hom(H^{(n)},\ZZ/2\ZZ)}.
\]
Given a free generating set $S=(s_1,\ldots,s_k)$ of $H^{(n)}$, we can identify the set of maps $\calH_\phi^{(n)}$ with $(\ZZ/2\ZZ)^k$ through the map that associates $(\rho(s_1),\ldots,\rho(s_k))$ to $\rho\circ (i\circ\phi)_* \in \calH^{(n)}_\phi$. This in turn allows us to define a \textbf{Hamming distance} $\dist_S:\calH_\phi^{(n)} \times \calH_\phi^{(n)}  \to \NN$ by
\[
\dist_S\Big(\rho_1\circ(\iota\circ\phi)_*,\;\rho_2\circ(\iota\circ\phi)_*\Big) = \card{\st{j}{\rho_1(s_j)\neq \rho_2(s_j)}}.
\]

We note that every element $\rho\in \calH_\phi^{(n)}$ corresponds to a cover of $Y_\phi^{(n)}$. If the image of $\rho$ is trivial, we will let this cover be the cover of $Y_\phi^{(n)}$ by two disjoint copies of itself. This way, all the corresponding covers are of degree two.

\subsection{Switching}\label{sec_switch}

In this section we will describe how two covers of degree two of $Y_\phi^{(n)}$ whose Hamming distance is $1$ differ from each other. 

\begin{figure}[h]
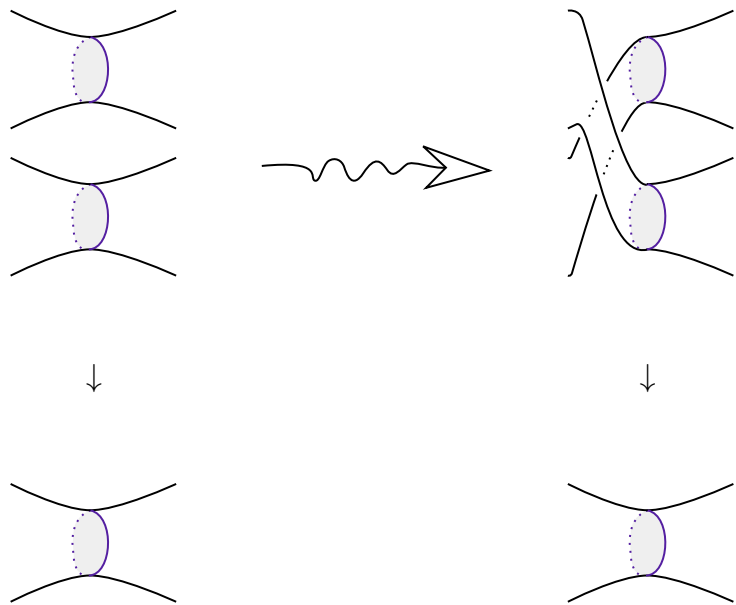

\begin{center}
\begin{overpic}{pic_switch}
\put(10.5,30){$\downarrow$}
\put(87,30){$\downarrow$}
\end{overpic}
\caption{A local picture of a switch. The surface below represents a neighborhood of one of the prefered pants curves $\alpha_i^{(n)}$ of $Y^{(n)}_\phi$. The surfaces up top (one on the left and one on the right) are the neighborhoods of the lifts of $\alpha_i^{(n)}$ in two distinct covers of degree two. The shading is there to indicate that the curves that are involved are meridians.}\label{pic_switch}
\end{center}
\end{figure}

To make this precise, we need to first decribe a free generating set of $H^{(n)}$. One way to do this is to identify $F_2$ with the fundamental group of a graph $W^{(1)}$. We will let $W^{(1)}$ be a wedge of two circles. Because the pants curves in $\calP_0$ are meridians, the two edges of $W^{(1)}$ are dual to two of the pants curves in the pants decomposition $\calP_0$ we've fixed. If we change the handlebody attachment by a homeomoprhism $\phi:X_B\to X_B$, this changes the identification. That is, we can think of the edges of $W^{(1)}$ as corresponding to two of the three pants curves in $\phi^{-1}(\calP_0)$. $H^{(n)}$ is the fundamental group of a graph $W^{(n)}$ that covers $W^{(1)}$. The edges in this graph are dual to the canonical pants decomposition of $Y_\phi^{(n)}$. 

In order to obtain a prefered free generating set of $H^{(n)}$, we pick an arbitrary spanning tree in $W^{(n)}$ and contract it. The edges that do not appear in the tree yield a generating set $S^{(n)}=(s_1,\ldots,s_k)$ of $H^{(n)}$. Because of the correspondance between edges and pants curves described above, each of these generators is dual to a pants curve in the canonical pants decomposition of $Y_\phi^{(n)}$. We will denote these prefered pants curves by $(\alpha_1^{(n)},\ldots,\alpha_k^{(n)})$.

Given a map $\rho:H^{(n)}\to \ZZ/2\ZZ$, the corresponding cover of degree two of $Y_\phi^{(n)}$ can be built as follows:
\begin{itemize}
\item Write $Z_\phi^{(n)} = \overline{Y_\phi^{(n)} - \cup_{i=1}^k \alpha_i}$, where $\overline{X}$ denotes the completion of a hyperbolic surface $X$, so $Z_\phi^{(n)}$ is a surface with boundary,
\item take two disjoint copies $Z_1$ and $Z_2$ of $Z_\phi^{(n)}$, and
\item for each pants curve $\alpha_i$: 
\begin{itemize}
\item if $\rho(s_i) = [0]$, glue the two boundary components of $Z_1$ corresponding to $\alpha_i$ back together and do the same for those in $Z_2$,
\item if $\rho(s_i) = [1]$, glue the ``left'' boundary component of $Z_1$ corresponding to $\alpha_i$ to the ``right'' boundary component of $Z_2$ and, likewise, glue the corresponding right boundary component of $Z_1$ to the left boundary component of $Z_2$.
\end{itemize}
All these gluings above should be performed with the correct twist, so as to guarantee that the resulting surface covers $Y_\phi^{(n)}$.
\end{itemize}

It follows from this desciption that, if 
\[
\dist_{S^{(n)}}\Big( \rho_1 \circ (i\circ \phi)_*,\; \rho_2 \circ (i\circ \phi)_*\Big) =1,
\]
then the two corresponding covers can be obtained from each other by opening up the lifts corresponding to one of the prefered pants curves $\alpha_i^{(n)}\subset Y_\phi^{(n)}$ and gluing them back together with the opposite pattern. We will call this operation a \textbf{switch} -- this same terminology is used in graph theory for what happens at the level of the $2$-covers of the graph $W^{(n)}$. Figure \ref{pic_switch} shows a local picture of this move.

\subsection{Finishing the proof}\label{sec_finishing}

We now have everything we need in place to finish the proof of our main theorem. Indeed, Theorem \ref{thm_main} is implied directly by the following result:

\begin{thm}
For all $n\in\NN$ large enough, the probability that the set
\[
\st{\lambda_1(\what{Y})}{\what{Y} \text{ is a connected cover of }Y_\phi^{(n)} \text{ of degree two}}
\]
is $\eta$-dense in $[0,\frac{1}{4}]$, is strictly positive.
\end{thm}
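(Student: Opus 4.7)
The plan is to interpolate $\lambda_1$ between two extreme connected degree-two covers of $Y_\phi^{(n)}$ -- a ``low'' cover $\what Y_{\min}$ with $\lambda_1$ near $0$ and a ``high'' cover $\what Y_{\max}$ with $\lambda_1 > \frac{1}{4}-\eta$ -- by walking in $(\ZZ/2\ZZ)^k\cong\calH_\phi^{(n)}$ through Hamming-distance-one switches, using Proposition \ref{prp_costofswitching} to bound the change in $\lambda_1$ per step. For $\what Y_{\max}$, we fix a non-trivial $\pi:F_2\to\ZZ/2\ZZ$ and let $\rho_{\max}\in\calH_\phi^{(n)}$ correspond to the restriction $\pi|_{H^{(n)}}$; the resulting cover also covers the degree-two cover $Y_\pi = \ker(\pi\circ(\iota\circ\phi)_*)\backslash\HH^2$ of $X_B$, which by Section \ref{sec_bass_notes_of_Bolza_covers} satisfies $\lambda_1(Y_\pi)>\frac{1}{4}$. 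Applying Theorem \ref{thm_HideMagee} with $\Gamma = \ker(\pi\circ(\iota\circ\phi)_*)$, $\Lambda = \ker(\pi)$, and $\tilde\rho_n:=\rho_n|_{\ker(\pi)}$ -- whose strong convergence descends from that of $\rho_n$ via the splitting of $\ell^2(F_2)|_{\ker(\pi)}$ as a finite direct sum of copies of $\ell^2(\ker(\pi))$ -- gives $\lambda_1(\what Y_{\max})\to\min\{\lambda_1(T_\phi),\lambda_1(Y_\pi)\}>\frac{1}{4}-\frac{\eta}{2}$ in probability, using Proposition \ref{prp_good_tree_cover}(\ref{item_gap}). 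For $\what Y_{\min}$, we take $\rho_{\min}\in\calH_\phi^{(n)}$ sending one free generator $s_i$ to $[1]$ and the rest to $[0]$; the corresponding connected cover is obtained by cross-gluing two copies of $Y_\phi^{(n)}$ along one preferred pants curve of length $\approx \ell$, so its Cheeger constant is $O(\ell/n)\to 0$ and $\lambda_1(\what Y_{\min})\to 0$.

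To interpolate we connect $\rho_{\min}$ to $\rho_{\max}$ by a path in $(\ZZ/2\ZZ)^k\setminus\{0\}$ whose consecutive elements are at Hamming distance one; such a path exists because the non-zero elements form a connected subgraph of the hypercube when $k\geq 2$. For any two adjacent covers $\what Y_1,\what Y_2$ related by a switch along $\alpha_i^{(n)}$ and an $L^2$-normalized eigenfunction $f$ of $\what Y_1$ of eigenvalue $\lambda = \lambda_1(\what Y_1)\leq\frac{1}{4}-\eta$, Theorem \ref{thm_GLMST} with $t=\ell$ gives $\norm{f}_\infty^2 \leq A\cdot e^{-\ell\sqrt\eta}\cdot N$, where $N$ is a uniform upper bound on $\card{\st{\gamma\in\pi_1(\what Y_1)}{\dist_{\HH^2}(\tilde z,\gamma\tilde z)\leq\ell}}$ for $z$ in the collar of a lift of $\alpha_i^{(n)}$. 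Applying Proposition \ref{prp_costofswitching} to the (at most two) lifts of $\alpha_i^{(n)}$ in $\what Y_1$ then yields $|\lambda_1(\what Y_1)-\lambda_1(\what Y_2)|\leq \eta$ by our choice \ref{cst_length}. Since the walk begins with $\lambda_1$ near $0$ and ends with $\lambda_1>\frac{1}{4}-\eta$, with consecutive values differing by at most $\eta$ throughout the range $[0,\frac{1}{4}-\eta]$, the visited set of spectral gaps is $\eta$-dense in $[0,\frac{1}{4}]$.

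The hard part is establishing the geometric hypotheses used above: at every preferred pants curve $\alpha_i^{(n)}$ of $Y_\phi^{(n)}$, the collar of width $w$ is standard, and the ball-count $N$ is bounded independently of $n$. Both conditions lift to any degree-two cover $\what Y$: collars lift to collars, and $\pi_1(\what Y)\subset\pi_1(Y_\phi^{(n)})$. Since $Y_\phi^{(n)} = H^{(n)}\backslash T_\phi$ is a quotient of the infinite tree cover $T_\phi$ -- in which collars are already standard by Proposition \ref{prp_good_tree_cover}(\ref{item_collar}) -- both hypotheses can fail only when short non-trivial loops in $Y_\phi^{(n)}$, coming from non-identity elements of $H^{(n)}$ acting on $T_\phi$, cross a preferred pants curve. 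Such loops correspond to short cycles of $\rho_n(w)$ for various words $w\in F_2$. Combining Nica's theorem \cite{Nica} on the cycle structure of word maps for uniform random $\rho_n$ with the probabilistic choice of $\phi$ afforded by Mirzakhani's results underlying Proposition \ref{prp_good_tree_cover}, we plan to show that with positive probability these geometric conditions hold simultaneously at all preferred pants curves. Intersecting this positive-probability geometric event with the high-probability strong-convergence events then yields the theorem.
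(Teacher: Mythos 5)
Your proposal is correct and follows essentially the same strategy as the paper: a switching walk in $\calH_\phi^{(n)}$ between a near-$0$ cover and a near-$\frac{1}{4}$ cover (the latter via Bordenave--Collins strong convergence, Theorem \ref{thm_HideMagee} and the bound $\lambda_1>\frac{1}{4}$ for degree-two covers of $X_B$ from Section \ref{sec_bass_notes_of_Bolza_covers}), with the per-switch estimate coming from Proposition \ref{prp_costofswitching} combined with Theorem \ref{thm_GLMST}, and the collar/systole hypotheses secured with positive probability by Nica's theorem applied to the finitely many bounded-length conjugacy classes. The only divergences are organizational and harmless: the paper realizes the high endpoint by tensoring $\rho_n$ with a character $\theta$ over the base $X_B$ rather than restricting $\rho_n$ to $\ker(\pi)$ over the base $Y_\pi$ (the two are equivalent), and it takes the low endpoint to be the disconnected cover attached to the trivial homomorphism (declared to have $\lambda_1=0$) rather than your connected cross-glued cover with Cheeger constant $O(1/n)$ --- your choice has the small advantage of keeping every vertex of the walk a connected cover, as the statement requires.
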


\begin{proof}
First of all, we use Proposition \ref{prp_good_tree_cover} to choose a homeomorphism $\phi:X_B\to X_B$ that gives us a tree cover with the properties in that same proposition.

It now follows from Theorems \ref{thm_HideMagee} and \ref{thm_BordenaveCollins} that 
$\lambda_1(Y_\phi^{(n)}) > \frac{1}{4}-\eta$ with high probability. In fact, using the same argument as in \cite[Section 4]{Magee}, \[
\theta\otimes\rho_n \stackrel{\text{strong}}{\longrightarrow} \rho_{\mathrm{reg}}
\]
in probability as $n\to\infty$, for any $\theta\in\Hom(F_2,\ZZ/2\ZZ)$. For us this means that, again using Theorem \ref{thm_HideMagee}, we obtain covers of degree two, corresponding to some element in $\calH^{(n)}_\phi$, of $Y_\phi^{(n)}$ whose spectral gap is at least 
\[
\lambda_1(T_\phi) - \eta/2 > \frac{1}{4}-\eta
\]
for $n$ large enough. 

The set $\calH^{(n)}_\phi$ also contains the trivial homorphism, the corresponding degree two cover of which has $\lambda_1(\what{Y})=0$. 

We can go between these covers using a finite number of switches, so as soon as we prove that a single switch doesn't change the spectral gap by more than $\eta$, we are done.

In order to do this, we need to make one further assumption on the surfaces $Y^{(n)}$: the preferred pants curves in $Y^{(n)}$ need to have collar width at least $w$ and the systole of $Y^{(n)}$ needs to be at least $\ell$. 

Both of these statements are true in the tree cover $T_\phi$. This means, that if they're not true in $Y^{(n)}$, there needs to be a closed geodesic $\gamma\subset X_B$ of uniformly bounded length $L=L(w,\ell)$ such that
\begin{equation}\label{eq_fixed_point}
\iota_*(\gamma) \neq e \quad \text{but} \quad \rho_n\circ (\iota\circ\phi)_*(\gamma) \cdot 1 = 1.
\end{equation}
It follows from Nica's work \cite{Nica} that, with asymptotically strictly positive probability, $\rho_n(g)$ is fixed point free for all $g$ in any fixed finite set of non-trivial conjugacy classes in $F_2$. The statement we use here is the following. First of all, let $K_1,\ldots K_m\subset F_2$ be a finite collection of distinct non-trivial conjugacy classes, none of which contain a power of a non-trivial element of $F_2$. Moreover, let $(c_1,\ldots,c_k)$ be a finite set of cycle lengths. Then the vector of random variables $\Big(X_{c_i}(K_j)\Big)_{1\leq i \leq m,\; 1\leq j\leq k}$, where $X_{c_i}(K_j)$ denotes the number of $c_i$-cycles in the image of an element of $K_j$ under a uniformly random homomorphism $\rho \in \Hom(F_2,\sym_n)$ converges in distribution to a vector of independent random variables. Furthermore, the limit of $X_{c_i}(K_j)$ is $\mathrm{Poisson}(1/c_i)$-distributed. This statement is not literally contained in Nica's paper, but it can be proven using similar methods. Versions of this statement are also available in \cite[Section 4]{LinialPuder}, \cite[Theorem 1.4]{BakerPetri}, \cite[Corollary 1.7 and Theorem 1.14]{PuderZimhoni}. The only consequence we need is that, there exists a uniform $a>0$ such that for all $n$ large enough:
\[
\PP\Big(X_{c_i}(K_j) = 0\text{ for }i=1,\ldots,k \text{ and } j=1,\ldots,m\Big) > a.
\]
Also observe that if $g = u^k\in F_2$ and if, under a permutation representation of $F_2$, $u$ has no $d$-cycles for any divisor $d$ of $k$, then $g$ has no fixed points.

Since $\Gamma_B$ contains a finite number of conjugacy classes of elements of bounded translation length, we can avoid the existence of a closed geodesic $\gamma\subset X_B$ as in \eqref{eq_fixed_point} with strictly positive probability. Moreover, the existence of a $2$-cover with spectral gap above $\frac{1}{4}-\eta$ has probability tending to $1$ as $n\to \infty$, so this property is preserved when we restrict to covers without short geodesics that don't lift to $T_\phi$. In other words, the probability that $Y^{(n)}_\phi$ has systole at least $\ell$, all the curves in the canonical pants decomposition have standard collars of width $w$ and $Y^{(n)}_\phi$ has a $2$-cover with spectral gap at least $\frac{1}{4}-\eta$, is asymptotically strictly positive.

So now suppose $\what{Y}_1$ and $\what{Y}_2$ are two covers of degree two of $Y_\phi^{(n)}$ that differ by a switch move. Moreover, let $f$ be a $\lambda_1(\what{Y}_1)$-eigenfunction on $\what{Y}_1$ and let $\alpha_i$ be the curve in which we need to switch to obtain $\what{Y}_2$. Using Proposition \ref{prp_costofswitching}, there exists a smooth function $f'$ that restricts to the $0$ function on both lifts of $\alpha_i$ and
\[
\frac{\langle \Delta f',f'\rangle}{\langle f',f'\rangle} \leq \lambda_1(\what{Y}_2) + \eta.
\]
Here we've used both the assumption on the collar widths and the systole of $Y_\phi^{(n)}$ and the relations between our constants (Section \ref{sec_constants}). Because $f'$ cancels on the lifts of $\alpha_i$, it defines a function on the cover $\what{Y}_2$ and hence
\[
\lambda_1(\what{Y}_2) \leq \lambda_1(\what{Y}_1) + \eta,
\]
by the variational characterization of $\lambda_1$, and we are done.
\end{proof}

%%%%%%%%%%%%%%%%%%%%%%%%%%%%%%%%%%%%%%%%%%%%%%%%%
%		B I B L I O G R A P H Y
%%%%%%%%%%%%%%%%%%%%%%%%%%%%%%%%%%%%%%%%%%%%%%%%%
\bibliography{bib}
\bibliographystyle{alpha}

\end{document}